\documentclass[11pt,a4paper]{amsart}

\usepackage[myheadings]{fullpage}
\usepackage{amsmath,amssymb,amsthm}
\usepackage{amsfonts,enumerate,mathtools}
\usepackage{hyperref}
\hypersetup{
    colorlinks=true,   
    linkcolor=blue,
    citecolor=blue,
}

\newtheorem{cor*}{Corollary}
\newtheorem{prop*}{Proposition}
\newtheorem{thm*}{Theorem}

\newtheorem{theorem}{Theorem}[section]
\newtheorem{cor}{Corollary}[theorem]
\newtheorem{prop}[theorem]{Proposition}
\newtheorem{lem}[theorem]{Lemma}

\newcommand{\s}{\mathbb{S}}
\newcommand{\Z}{\mathbb{Z}}
\DeclareMathOperator{\pmap}{PMod}
\DeclareMathOperator{\lmap}{LMod}
\DeclareMathOperator{\smap}{SMod}
\DeclareMathOperator{\map}{Mod}
\everymath{\displaystyle}

\begin{document}

\title[Liftable mapping class groups of certain branched covers of torus]{Liftable mapping class groups of certain branched covers of torus}

\author{Pankaj Kapari}
\address{School of Mathematics,
Tata Institute of Fundamental Research,
Dr. Homi Bhabha Road, Navy Nagar, Colaba,
Mumbai, 400005, Maharashtra India}

\email{pankajkapri02@gmail.com}
\urladdr{https://sites.google.com/view/pankajkapdi/}

\subjclass[2020]{Primary 57K20, Secondary 57M60}

\keywords{hyperbolic surfaces, branched covers, liftable mapping class groups, Birman-Hilden theory}

\begin{abstract}
Let $S_{g,n}$ be a closed oriented hyperbolic surface of genus $g$ with $n$ marked points, with the understanding that $S_{g,0}=S_g$. Let $\mathrm{Mod}(S_{h,n})$ be the mapping class group of $S_{h,n}$ and $\mathrm{LMod}_p(S_{h,n})$ be the liftable mapping class group associated to a cover $p:S_g\to S_{h,n}$. For the cover $p_k:S_k\to S_{1,2}$, Ghaswala, in his PhD thesis, derived a finite presentation for $\mathrm{LMod}_{p_k}(S_{1,2})$ when $k=2,3,4$ and a finite generating set when $k=5,6$ using the Reidemeister-Schreier rewriting process. In this paper, we derive a finite generating set for $\mathrm{LMod}_{p_k}(S_{1,2})$ for all $k\geq 2$. In the process, we also prove that the kernel of the homology representation $\Psi:\mathrm{Mod}(S_{1,2})\to \mathrm{GL}_3(\Z)$ is normally generated by a Dehn twist about a separating simple closed curve, and it is free with a countable basis. We also provide an explicit countable basis for $\ker\Psi$ consisting of separating Dehn twists. As an application of Birman-Hilden theory, we provide a finite generating set for the normalizer of the Deck group of $p_k$ in $\mathrm{Mod}(S_k)$ when $k=2,3$. We conclude the paper by proving that $\mathrm{LMod}_{p_k}(S_{1,2})$ is maximal in $\mathrm{Mod}(S_{1,2})$ if and only if $k$ is prime. 
\end{abstract}

\maketitle

\section{Introduction}
\label{sec:into}
Let $S_{g,n}$ denote the closed oriented hyperbolic surface of genus $g\geq 0$ and $n\geq 0$ marked points. The surface $S_{g,0}$ will be denoted by $S_g$. The \textit{mapping class group} of $S_{g,n}$, denoted by $\map(S_{g,n})$, is the group of isotopy classes of orientation-preserving self-homeomorphisms of $S_{g,n}$ that preserve the set of marked points. The elements of $\map(S_{g,n})$ are called \textit{mapping classes}. All covers will be assumed to be finite-sheeted, regular, and branched over hyperbolic surfaces unless stated otherwise. For a cover $p:S_g\to S_{h,n}$, the \textit{liftable mapping class group} of $p$, denoted by $\lmap_p(S_{h,n})$, is the subgroup of $\map(S_{h,n})$ consisting of mapping classes represented by homeomorphisms that lift under the cover $p$. For the cover $p_k:S_k\to S_{1,2}$ (see Figure~\ref{fig:s3_s12_cover} for $k=2,3$), we study $\lmap_{p_k}(S_{1,2})$ and provide a finite generating set for $\lmap_{p_k}(S_{1,2})$ for all $k\geq 2$. We now provide an account of past works and recent developments in the understanding of liftable mapping class groups of covers.
\begin{figure}
\centering
\includegraphics[scale=1.2]{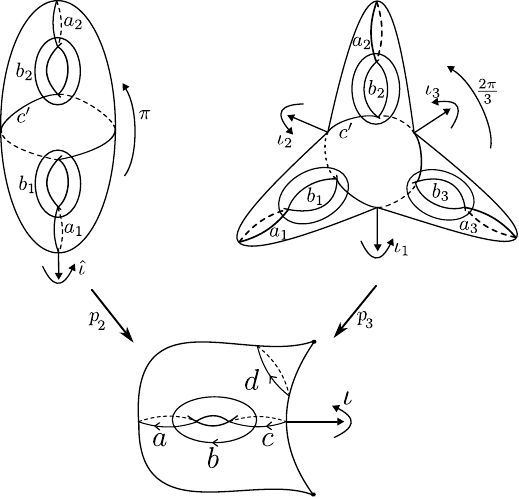}
\caption{For $k=2,3$, a $k$-sheeted cyclic cover $p_k:S_k\to S_{1,2}$.}
\label{fig:s3_s12_cover}
\end{figure}

The \textit{symmetric mapping class group}, denoted by $\smap_p(S_{g})$, is the subgroup of $\map(S_g)$ consisting of mapping classes represented by fiber-preserving homeomorphisms. The cover $p$ is said to have the \textit{Birman-Hilden property} if two isotopic fiber-preserving homeomorphisms are isotopic through fiber-preserving homeomorphisms. In 1971, Birman-Hilden~\cite[Theorem 7]{birman71} established this property for the cover $p:S_2\to S_{0,6}$ induced by the hyperelliptic involution. Consequently, they derived~\cite[Theorem 8]{birman71} the first known finite presentation of $\map(S_2)$ by proving that $\smap_p(S_2)=\map(S_2)$, $\lmap_p(S_{0,6})=\map(S_{0,6})$, and lifting a finite presentation of $\lmap_p(S_{0,6})$ under the map $\smap_p(S_2)\to \lmap_p(S_{0,6})$. Later on, Birman-Hilden~\cite[Theorem 4-5]{birman73} and Harvey-Maclachlan~\cite[Corollary 12]{harvey75} established this property for all covers. If $p$ has the Birman-Hilden property, then there is a short exact sequence~\cite[Theorem 10]{harvey75}
\[
1 \to \mathrm{Deck}(p)\to \smap_p(S_g)\to \lmap_p(S_{h,n})\to 1,
\]
where $\mathrm{Deck}(p)$ is the group of Deck transformations of $p$. Furthermore, $\smap_p(S_g)$ is the normalizer $N_{\map(S_g)}(\mathrm{Deck}(p))$ of $\mathrm{Deck}(p)$ in $\map(S_g)$~\cite[Theorem 4]{birman73}. For a finite order mapping class $F\in \map(S_g)$, Birman-Hilden~\cite[Section 5]{birman73} gave a finite presentation for $N_{\map(S_g)}(F)$ assuming that that $\langle F \rangle$ induces a cover of the sphere under which every mapping class lifts. For a survey of Birman-Hilden property and liftable mapping class groups, we refer the interested readers to~\cite{margalit21}.

In 2017, Ghaswala-Winarski~\cite[Theorem 1.3]{winarski17} classified all covers $S_g\to S_{0,n}$ under which every homeomorphism lifts. In a subsequent work~\cite[Theorem 5.7]{ghaswala17}, they derived a finite presentation for $\lmap_p(S_{0,n})$ associated to a balanced superelliptic cover $p:S_g\to S_{0,n}$ (a hyperelliptic cover is a particular case of this cover). Later on, Hirose-Omori~\cite[Theorem 5.3]{hirose_liftable} derived a finite presentation for the normalizer of the balanced superelliptic map by lifting a presentation of the liftable mapping class group. Abelian branched covers of a sphere and a real projective plane under which every homeomorphism lifts have also been classified (see~\cite{abelian_all_lift1,abelian_all_lift2, abelian_all_lift_rp2}). 

For covers $p:S_g\to S_{0,n}$, in a recent work~\cite[Theorem 3.3 and Remark 3.4]{kapdi_liftable1}, using an algebraic characterization of liftability due to Broughton~\cite[Theorem 3.2]{broughton92}, the authors gave a method to derive finite generating sets for $\lmap_p(S_{0,n})$. Consequently, they derived~\cite[Corollary 4.6.1]{kapdi_liftable1} a finite presentation for $N_{\map(S_g)}(F)$ and the centralizer $C_{\map(S_g)}(F)$ of a finite order reducible mapping class $F\in \map(S_g)$ of the highest order $2g+2$ (see~\cite[Theorem 4.1]{2g+2} for the upper bound on the order of reducible finite order mapping classes). For the unbranched cover $p:S_{k(g-1)+1}\to S_g$, using the symplectic criterion~\cite[Theorem 2.2]{dhanwani_liftable1}, a finite generating set for $\lmap_p(S_g)$ has been obtained in~\cite[Theorem 3.9]{dhanwani_liftable2} for $g\geq 3$ and $k\geq 2$, and in~\cite[Theorem 3.10]{kapdi_liftable2} for $g=2$ and $k$ prime.   

For the cover $p_k:S_k\to S_{1,2}$, Ghaswala~\cite[Section 5.1.3]{ghaswala_thesis}, in his PhD thesis, derived a finite presentation for $\lmap_{p_k}(S_{1,2})$ when $k=2,3,4$ and a finite generating set when $k=5,6$ using the Reidemeister-Schreier rewriting process on SageMath. We aim to derive a finite generating set for $\lmap_{p_k}(S_{1,2})$ for all $k\geq 2$. To achieve our aim, we use the homology representation $\Psi:\map(S_{1,2})\to \mathrm{GL}_3(\Z)$ of $\map(S_{1,2})$ induced by the action of $\map(S_{1,2})$ on $H_1(S_{1,2};\Z)$.

We now describe the results obtained in this paper while simultaneously providing an outline of the paper. In Section~\ref{sec:prelim}, we prove the following result (see Proposition~\ref{prop:genset_image}) which provide a finite generating set for $\Psi(\lmap_{p_k}(S_{1,2}))$. For a simple closed curve $c$, a left-handed Dehn twist about $c$ will be denoted by $T_c$.

\begin{prop*}
For curves as in Figure~\ref{fig:s3_s12_cover}, we have $$\Psi(\lmap_{p_k}(S_{1,2}))=\langle \Psi(T_a),\Psi(T_b),\Psi(T_c^k),\Psi(\iota)\rangle.$$
\end{prop*}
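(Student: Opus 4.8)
The plan is to reduce the statement to a concrete computation inside $\mathrm{GL}_3(\Z)$ in three stages: first translate liftability into a congruence condition on homology, then identify the image $\Psi(\map(S_{1,2}))$, and finally run a matrix reduction that exhibits every liftable matrix as a word in the four proposed generators.

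Since $p_k$ is a cyclic, hence abelian, branched cover, it is determined by a surjection $\bar\phi\colon H_1(S_{1,2};\Z)\to\Z/k$ whose kernel on $\pi_1$ contains the commutator subgroup. By Broughton's criterion, a mapping class $f$ lifts if and only if $f_*$ preserves $\ker\bar\phi$; because $\ker\bar\phi$ is determined by the homology action, this means $f\in\lmap_{p_k}(S_{1,2})$ if and only if $\Psi(f)$ preserves $\ker\bar\phi\le H_1(S_{1,2};\Z/k)$. Equivalently, writing $\bar\phi$ as the coordinate functional dual to the puncture class $\delta$, the condition is $\bar\phi\circ\Psi(f)=u\,\bar\phi$ for some unit $u\in(\Z/k)^\times$. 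Thus $\Psi(\lmap_{p_k}(S_{1,2}))=\Psi(\map(S_{1,2}))\cap L_k$, where $L_k$ is the congruence-type subgroup of matrices whose $\delta$-row is $\equiv(0,0,u)\pmod k$.

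Next I would record the image $\Psi(\map(S_{1,2}))$ explicitly in the basis $\{[a],[b],[\delta]\}$, where $[\delta]=[a]+[c]$ spans the radical of the intersection form and is preserved up to sign by every mapping class. In this basis each $\Psi(f)$ is block lower triangular, $\Psi(f)=\left(\begin{smallmatrix}A&0\\ w&\epsilon\end{smallmatrix}\right)$ with $A\in\mathrm{SL}_2(\Z)$ acting on the genus quotient, $w\in\Z^{1\times 2}$ recording the point-pushing part, and $\epsilon=\pm1$ recording whether the two marked points are interchanged; all such matrices occur. The liftability condition then reads simply $w\equiv(0,0)\pmod k$ (the entry $\epsilon$ is automatically a unit), so $\Psi(\lmap_{p_k})$ is exactly the set of such matrices with $w\in k\Z^2$. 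Checking the easy inclusion is then a direct computation: $\Psi(T_a),\Psi(T_b)$ have $w=0$, $\Psi(\iota)=-I$ has $w=0$ and $\epsilon=-1$, and since $\Psi(T_c)-I$ is nilpotent one finds $\Psi(T_c^k)$ has $w=(0,k)\equiv 0\pmod k$; hence all four generators lie in $\Psi(\lmap_{p_k})$.

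The substance of the proof is the reverse inclusion, carried out by a normal-form reduction of an arbitrary $M=\left(\begin{smallmatrix}A&0\\ w&\epsilon\end{smallmatrix}\right)$ with $w\in k\Z^2$. Multiplying by $\Psi(\iota)$ if necessary I may assume $\epsilon=1$. Because the $A$-blocks of $\Psi(T_a)$ and $\Psi(T_b)$ generate $\mathrm{SL}_2(\Z)$ while keeping $w=0$, a word in $T_a,T_b$ clears the $A$-block and leaves $\left(\begin{smallmatrix}I&0\\ w&1\end{smallmatrix}\right)$. It remains to realize the pure point-pushing translations by $w\in k\Z^2$. Here the main obstacle is that $\Psi(T_c^k)$ alone perturbs the $A$-block, so I would first correct it by a power of $T_a$ to obtain a genuine translation $\left(\begin{smallmatrix}I&0\\(0,k)&1\end{smallmatrix}\right)$, and then conjugate this element by suitable words in $T_a,T_b$; conjugation by $\left(\begin{smallmatrix}B&0\\0&1\end{smallmatrix}\right)$ sends the translation vector $w$ to $wB^{-1}$, so ranging $B$ over $\mathrm{SL}_2(\Z)$ produces translations in both coordinate directions and hence all of $k\Z^2$. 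Using these to kill $w$ reduces $M$ to the identity, expressing it as a word in the four generators and completing the proof. The only genuinely delicate points are the determination of the image $\Psi(\map(S_{1,2}))$ and verifying that the single generator $\Psi(T_c^k)$, together with the $\mathrm{SL}_2(\Z)$-action, suffices to generate the full translation lattice $k\Z^2$.
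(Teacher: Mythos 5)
Your proposal is correct and follows essentially the same route as the paper: after the block lower-triangular description of $\Psi(\map(S_{1,2}))$ and $\Psi(\lmap_{p_k}(S_{1,2}))$ (the paper's Lemma~\ref{lem:image_psi}, which it imports from Ghaswala's thesis rather than rederiving via Broughton's criterion as you sketch), one normalizes the sign with $\Psi(\iota)$, uses $\Psi(T_a),\Psi(T_b)$ to handle the $\mathrm{SL}_2(\Z)$ block, and kills the bottom row using $\Psi(T_c^k)$. The only cosmetic difference is in that last step: the paper rotates $v=(m,n)$ to $(0,\gcd(m,n))$ by an explicit matrix from Lemma~\ref{lem:gcd_sl2z} and then cancels with a power of $\Psi(T_c^k)$, whereas you generate the full translation lattice $k\Z\times k\Z$ by conjugating the pure translation $\Psi(T_c^k)\Psi(T_a)^{-k}$ by $\mathrm{SL}_2(\Z)$; both computations are valid.
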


The kernel of the symplectic representation $\map(S_2)\to \mathrm{SP}_4(\Z)$, denoted by $\mathcal{I}(S_2)$, is called the Torelli group of $S_2$. It is known that $\mathcal{I}(S_2)$ is normally generated by a Dehn twist about a separating simple closed curve~\cite[Theorem 2]{birman_torelli}. Furthermore, $\mathcal{I}(S_2)$ is not finitely generated~\cite{mccullough86}, and it is free with a countable basis~\cite[Proposition 4]{mess_torelli}. But, an explicit free basis of $\mathcal{I}(S_2)$ consisting of separating Dehn twists is not known. In this context, in Section~\ref{sec:genset_kerpsi}, we prove that $\ker \Psi$ is free with a countable basis, and it is normally generated by a separating Dehn twist. We also provide an explicit free basis for $\ker\Psi$ consisting of countably many separating Dehn twists (see Theorem~\ref{thm:genset_ker}).

\begin{thm*}
The kernel of $\Psi$ is isomorphic to the commutator subgroup $[F_2,F_2]$ of a free group $F_2$ of rank $2$ and it is normally generated by a separating Dehn twist. Furthermore, for curves as in Figure~\ref{fig:s3_s12_cover}, we have
\[
\ker\Psi= \langle T_b^{-n}T_cT_a^{-1}T_b^nT_a^m(T_bT_c)^6T_a^{-m}T_b^{-n}T_aT_c^{-1}T_b^n \mid m,n \in \Z \rangle.
\]
\end{thm*}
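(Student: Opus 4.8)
The plan is to locate $\ker\Psi$ inside the point-pushing subgroup coming from the Birman exact sequence and then identify it with the commutator subgroup of $\pi_1(S_{1,1})$. First I would record the action of $\map(S_{1,2})$ on $H_1(S_{1,2};\Z)\cong\Z^3$, taking as basis the two genus classes $a,b$ together with the class $c$ of a small loop around the marked point $p_2$; since the loop around $p_1$ represents $-c$, any mapping class transposing the two marked points sends $c\mapsto -c$, so $\Psi$ is nontrivial on that coset and $\ker\Psi\subseteq\pmap(S_{1,2})$. Forgetting $p_2$ then gives the Birman exact sequence
\[
1\to \pi_1(S_{1,1})\xrightarrow{\ \mathrm{Push}\ }\pmap(S_{1,2})\xrightarrow{\ \phi\ }\map(S_{1,1})\to 1,
\]
with $\pi_1(S_{1,1})\cong F_2=\langle x,y\rangle$ and boundary word $[x,y]$. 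The surface map filling in $p_2$ induces the $\map$-equivariant surjection $H_1(S_{1,2};\Z)\to H_1(S_{1,1};\Z)$ that kills $c$; because $\map(S_{1,1})\to\mathrm{SL}_2(\Z)$ is an isomorphism, hence faithful on $H_1(S_{1,1};\Z)$, every $f\in\ker\Psi$ has trivial image under $\phi$ and therefore lies in $\mathrm{Push}(F_2)$.

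It then remains to compute $\Psi\circ\mathrm{Push}$. Writing $\mathrm{Push}(\gamma)=T_{\gamma_L}T_{\gamma_R}^{-1}$ and using that the puncture class $c$ lies in the radical of the intersection form, a short transvection calculation shows that $\mathrm{Push}(\gamma)$ fixes $c$ and sends each genus class $v$ to $v+\langle v,[\gamma]\rangle c$. Thus $\Psi(\mathrm{Push}(\gamma))$ depends only on $[\gamma]\in H_1(S_{1,1};\Z)\cong\Z^2$ and is trivial precisely when $[\gamma]=0$, i.e. $\gamma\in[F_2,F_2]$. Combining with the previous paragraph gives $\ker\Psi=\mathrm{Push}([F_2,F_2])\cong[F_2,F_2]$, and the Nielsen–Schreier theorem yields that $[F_2,F_2]$ is free of countably infinite rank, which establishes both the freeness and the existence of a countable basis.

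For the normal-generation statement, recall that $[F_2,F_2]$ is the normal closure of $[x,y]$ in $F_2$. Since $\mathrm{Push}$ is injective, intertwines conjugation in $\pmap(S_{1,2})$ with the induced $\pi_1$-action, and has normal image, the normal closure of $\mathrm{Push}([x,y])$ equals $\mathrm{Push}([F_2,F_2])=\ker\Psi$; as $\ker\Psi\trianglelefteq\map(S_{1,2})$, the same holds for the normal closure taken in all of $\map(S_{1,2})$. The element $\mathrm{Push}([x,y])$ is the push of $p_2$ around a loop encircling $p_1$, so it equals $T_{c_1}T_{c_{12}}^{-1}$ (up to order), where $c_1$ bounds a disk containing only $p_1$ and $c_{12}$ encircles both marked points. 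Since $T_{c_1}$ is trivial (its curve bounds a once-marked disk), we get $\mathrm{Push}([x,y])=T_{c_{12}}^{\pm1}$, a single separating twist, and identifying $c_{12}$ with the boundary of the one-holed torus neighbourhood of $b\cup c$ (the two-chain relation) gives $T_{c_{12}}=(T_bT_c)^6$.

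Finally, for the explicit basis I would fix the Reidemeister–Schreier free basis of $[F_2,F_2]$ associated to the abelianization $F_2\to\Z^2$ with Schreier transversal $\{x^my^n\}$, and transport it through $\mathrm{Push}$. Each $w(m,n)$ is the conjugate $(uT_a^m)(T_bT_c)^6(uT_a^m)^{-1}=T_{uT_a^m(c_{12})}$ with $u=T_b^{-n}T_cT_a^{-1}T_b^n$, hence a separating Dehn twist, and under $\mathrm{Push}^{-1}$ it corresponds to $(uT_a^m)_*[x,y]^{\pm1}$; the task is to check that this family of conjugates of $[x,y]$ is exactly a free basis of $[F_2,F_2]$. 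I expect this last translation to be the main obstacle: it requires expressing $\mathrm{Push}(x),\mathrm{Push}(y)$ — equivalently the automorphisms $(T_a)_*,(T_b)_*,(T_c)_*$ of $F_2$ — explicitly and matching the resulting indexed family with the Reidemeister–Schreier basis, whereas the homological transvection computation and the free-group facts used elsewhere are routine.
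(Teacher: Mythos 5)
Your outline follows the paper's proof almost exactly: both arguments run the Birman exact sequence for the forgetful map, identify $\ker\Psi$ with the image under $push$ of the commutator subgroup of $\pi_1(S_{1,1})\cong F_2$ (you via a direct transvection computation of $\Psi\circ push$, the paper via a small kernel-exact-sequence lemma applied to $\phi\circ\psi=forget$ — these are equivalent), and both use the chain relation to identify $push([x,y])$ with $(T_bT_c)^6$ and conclude normal generation. Two points need attention, one minor and one substantive.

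The minor point: the set $\{x^my^n[x,y]y^{-n}x^{-m}\}$ of conjugates of $[x,y]$ is \emph{not} the literal Reidemeister--Schreier basis of $[F_2,F_2]$ for the transversal $\{x^my^n\}$ (that basis consists of words $x^my^nxy^{-n}x^{-m-1}$ with $n\neq 0$). The fact that the conjugates of $[x,y]$ by the transversal form a free basis is a separate classical result (Tomaszewski), which the paper quotes as Lemma~\ref{lem:genset_comm_free}; you need that statement, not Reidemeister--Schreier, to get a basis of the claimed shape. The substantive point: you stop exactly where the explicit generating set in the theorem has to be produced. Once one knows $\ker\Psi=push([F_2,F_2])$ and has Tomaszewski's basis, injectivity of $push$ immediately gives that the images form a free basis — there is nothing further to ``check'' there — but one must actually compute $push(\bar a)=T_c^{-1}T_a$ and $push(\bar b)=T_b^{-1}T_\epsilon$ with $T_\epsilon=T_cT_bT_aT_b^{-1}T_c^{-1}$ (via the annulus-boundary formula for point pushes), substitute these into $x^my^n[x,y]y^{-n}x^{-m}$, and simplify using $[T_c,T_a]=[T_c,(T_bT_c)^6]=1$ and the braid relations to arrive at $T_b^{-n}T_cT_a^{-1}T_b^nT_a^m(T_bT_c)^6T_a^{-m}T_b^{-n}T_aT_c^{-1}T_b^n$. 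You instead write down the conjugator $u T_a^m$ with $u=T_b^{-n}T_cT_a^{-1}T_b^n$ as the expected answer and flag the derivation as an open obstacle; that derivation is precisely the remaining content of the paper's proof, so as written the explicit basis in the statement is asserted rather than proved.
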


In Section~\ref{sec:genset_lmod}, we obtain a finite generating set for $\lmap_{p_k}(S_{1,2})$ by combining generating sets of $\Psi(\lmap_{p_k}(S_{1,2}))$ and $\ker\Psi$ (see Theorem~\ref{thm:genset_lmod}).

\begin{thm*}
For $k\geq 2$, consider the $k$-sheeted cyclic cover $p_k:S_k \to S_{1,2}$. For curves as in Figure~\ref{fig:s3_s12_cover}, we have
\[
\lmap_{p_k}(S_{1,2})=\langle T_a,T_b,T_c^k,\iota, (T_bT_c)^6, T_c^jT_b^iT_a(T_bT_c)^6T_a^{-1}T_b^{-i}T_c^{-j}\mid 1\leq i,j <k \rangle,
\]
where $\iota $ is the hyperelliptic involution.
\end{thm*}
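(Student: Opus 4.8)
The plan is to assemble the generating set for $\lmap_{p_k}(S_{1,2})$ from the two pieces already established: the generating set for its image $\Psi(\lmap_{p_k}(S_{1,2}))$ under the homology representation (the displayed Proposition), and an explicit generating set for the kernel $\ker\Psi$ (the displayed Theorem). The structural backbone is the short exact sequence
\[
1 \to \ker\Psi \to \lmap_{p_k}(S_{1,2}) \to \Psi(\lmap_{p_k}(S_{1,2})) \to 1,
\]
which holds because $\ker\Psi$ is contained in $\lmap_{p_k}(S_{1,2})$ (this containment needs to be checked, but it should follow since $\ker\Psi$ is normally generated by a separating Dehn twist, and such twists lift — indeed $(T_bT_c)^6$ appears in the claimed generating set and lies in $\ker\Psi$). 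Given such a short exact sequence, a standard fact is that $\lmap_{p_k}(S_{1,2})$ is generated by any lifts of a generating set of the quotient together with a \emph{normal} generating set of the kernel, i.e.\ a set whose normal closure in the whole group is $\ker\Psi$.

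First I would record that $\Psi(\lmap_{p_k}(S_{1,2}))$ is generated by $\Psi(T_a), \Psi(T_b), \Psi(T_c^k), \Psi(\iota)$, and that the obvious preimages $T_a, T_b, T_c^k, \iota$ all lie in $\lmap_{p_k}(S_{1,2})$ — this is where I would invoke the liftability of each of these specific mapping classes, presumably already known for this cover. Next I would use the displayed Theorem to get $\ker\Psi$ as the normal closure of the single separating Dehn twist $(T_bT_c)^6$. Rather than carrying along the full countable basis, the point is that we only need $\ker\Psi$ as a \emph{normally} generated subgroup of $\lmap_{p_k}(S_{1,2})$: conjugates of $(T_bT_c)^6$ by elements of $\lmap_{p_k}(S_{1,2})$ will then be absorbed. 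So the remaining task is to identify finitely many conjugates of $(T_bT_c)^6$ whose normal closure, taken inside the \emph{already-generated} subgroup $\langle T_a, T_b, T_c^k, \iota\rangle$, recovers all of $\ker\Psi$.

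The main obstacle, and the technical heart of the argument, is precisely this reduction from a countable normal generating set to a finite one. The countable basis of $\ker\Psi$ from the previous theorem is indexed by $m,n\in\Z$, whereas the claimed finite generating set replaces the two-parameter family by the finite family $T_c^jT_b^iT_a(T_bT_c)^6T_a^{-1}T_b^{-i}T_c^{-j}$ with $1\le i,j<k$ together with $(T_bT_c)^6$ itself. My plan is to show that conjugating $(T_bT_c)^6$ by the generator $T_c^k$ (and by $T_b$, $T_a$) and by integer powers thereof produces, modulo the finite list, every basis element indexed by arbitrary $m,n$. The key mechanism is that $T_c^k \in \lmap_{p_k}(S_{1,2})$ lets us slide the exponent $j$ by multiples of $k$, so only the residues $0 \le j < k$ are genuinely new; an analogous periodicity or absorption argument handles the $T_b^i$ factors and the unbounded parameters $m,n$. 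I would make this precise by checking that each $\Z$-indexed basis element can be rewritten, using the group relations available in $\lmap_{p_k}(S_{1,2})$ and the fact that $T_a, T_b, T_c^k$ are themselves generators, as a product of conjugates of the finitely many listed elements. Verifying that this rewriting closes up — that no basis element escapes the finite list — is the delicate combinatorial step and is where I would expect to spend most of the effort.

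Finally I would confirm the reverse inclusion, namely that every element in the proposed finite generating set genuinely lies in $\lmap_{p_k}(S_{1,2})$: the four generators $T_a, T_b, T_c^k, \iota$ by the Proposition's setup, and the remaining conjugates $(T_bT_c)^6$ and $T_c^jT_b^iT_a(T_bT_c)^6T_a^{-1}T_b^{-i}T_c^{-j}$ because they lie in $\ker\Psi \subseteq \lmap_{p_k}(S_{1,2})$. Combining the two inclusions yields the stated equality. The entire argument is essentially the standard ``generators of the quotient plus normal generators of the kernel'' extension principle, with all the real work concentrated in the finite-to-countable reduction described above.
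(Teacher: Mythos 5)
Your overall architecture is the same as the paper's: split $\lmap_{p_k}(S_{1,2})$ along the short exact sequence $1\to\ker\Psi\to\lmap_{p_k}(S_{1,2})\to\Psi(\lmap_{p_k}(S_{1,2}))\to 1$ (the containment $\ker\Psi\subseteq\lmap_{p_k}(S_{1,2})$ is immediate from Lemma~\ref{lem:image_psi}, since the identity matrix lies in $\Psi(\lmap_{p_k}(S_{1,2}))$), take $T_a,T_b,T_c^k,\iota$ as lifts of generators of the quotient, and account for the kernel by conjugates of $(T_bT_c)^6$. The difficulty is that everything this theorem asserts beyond Proposition~\ref{prop:genset_image} and Theorem~\ref{thm:genset_ker} is concentrated in exactly the step you defer: showing that the $\Z^2$-indexed basis of $\ker\Psi$ is absorbed by the finite list. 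You describe this as periodicity coming from $T_c^k$ plus ``an analogous periodicity or absorption argument'' for the other parameter. The first half is right: after stripping the liftable outer conjugators $T_b^{-n}$ and $T_a^{-1}$ and applying a braid relation, the $T_b^n$-conjugation becomes a $T_c^n$-conjugation, and $T_c^k\in\lmap_{p_k}(S_{1,2})$ reduces $n$ modulo $k$. But the second parameter is not analogous: after these moves the basis element takes the form $T_c^jT_b^mT_a(T_bT_c)^6T_a^{-1}T_b^{-m}T_c^{-j}$ with $0\le j<k$ and $m\in\Z$ unbounded, and the $T_b^m$ is trapped \emph{inside} the non-liftable conjugator $T_c^j$, so it cannot be stripped or reduced using $T_a$, $T_b$, or $T_c^k$ alone.

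The paper's resolution requires a genuinely new ingredient that your plan does not supply: one computes $\Psi(T_c^jT_b^kT_c^{-j})$ explicitly, checks via Lemma~\ref{lem:image_psi} that this element lifts under $p_k$, and then shows (via Lemma~\ref{lem:gcd_sl2z} together with $\ker\Psi\subseteq\lmap_{p_k}(S_{1,2})$) that it is expressible in terms of $T_a$, $T_b$, $T_c^k$, and kernel elements. Conjugating by powers of $T_c^jT_b^kT_c^{-j}$ is what reduces $m$ to its residue $i$ modulo $k$. This liftability is a matrix computation, not a formal consequence of $T_b,T_c^k\in\lmap_{p_k}(S_{1,2})$, so without identifying and verifying it your claim that ``the rewriting closes up'' cannot be completed, and the reduction from countably many to finitely many generators remains unproved.
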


When $k=2,3$, we simplify the above generating set (see Proposition~\ref{prop:genset_lmod_23}) to recover the generating sets obtained by Ghaswala~\cite[Section 5.1.3]{ghaswala_thesis} using the Reidemeister-Schreier rewriting process.

\begin{prop*}
For $k=2,3$, we have $\lmap_{p_k}(S_{1,2})=\langle T_a,T_b,T_c^k,\iota\rangle$. 
\end{prop*}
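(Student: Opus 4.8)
The plan is to deduce the proposition directly from the generating set of Theorem~\ref{thm:genset_lmod} by showing that the ``extra'' generators are redundant when $k=2,3$. Writing $H=\langle T_a,T_b,T_c^k,\iota\rangle$, Theorem~\ref{thm:genset_lmod} expresses $\lmap_{p_k}(S_{1,2})$ as the group generated by $H$ together with $(T_bT_c)^6$ and the conjugates $g_{i,j}:=T_c^jT_b^iT_a(T_bT_c)^6T_a^{-1}T_b^{-i}T_c^{-j}$ for $1\le i,j<k$. Since $H\subseteq \lmap_{p_k}(S_{1,2})$ by construction, the equality $H=\lmap_{p_k}(S_{1,2})$ holds as soon as $(T_bT_c)^6\in H$ and every $g_{i,j}\in H$. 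For $k=2$ there is a single conjugate ($i=j=1$) and for $k=3$ there are four ($i,j\in\{1,2\}$), so in both cases the task is to produce finitely many explicit words in $T_a,T_b,T_c^k,\iota$, after which the proposition (and hence the match with Ghaswala's Reidemeister--Schreier output) follows at once.

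The core step is to realize the separating twist $(T_bT_c)^6=T_\delta$, where $\delta$ bounds a regular neighborhood of $b\cup c$ by the $2$-chain relation, as such a word. A natural route is the Birman exact sequence: under the forgetful map $\phi:\map(S_{1,2})\to\map(S_1)=\mathrm{SL}_2(\Z)$ one has $\phi(\iota)=-I$ (the elliptic involution) and $\phi((T_bT_c)^3)=-I$, so $z:=\iota\,(T_bT_c)^3$ lies in $\ker\phi$, the point-pushing subgroup. Checking from the configuration in Figure~\ref{fig:s3_s12_cover} that $\iota$ preserves the isotopy classes of $b$ and $c$ (hence commutes with $T_b,T_c$), one obtains $(T_bT_c)^6=z^2$, reducing the problem to identifying $z$ as an explicit point-push and expanding it via the standard formula (a push along a loop equals a product $T_xT_y^{-1}$ of twists about the two boundary curves of an annular neighborhood). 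This rewrites $(T_bT_c)^6$ in terms of $T_a$ and powers of $T_c$, and the force of the hypothesis $k\le 3$ is precisely that the $T_c$-exponents occurring can be collected into multiples of $k$, placing the word inside $H$.

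Granting $(T_bT_c)^6\in H$, I would finish with the conjugates. Each $g_{i,j}$ is a separating twist $T_{\gamma_{i,j}}$ obtained by conjugating $T_\delta$ by $T_c^jT_b^iT_a$; writing $W\in H$ for the word found above, we have $g_{i,j}=T_c^j\,(T_b^iT_aWT_a^{-1}T_b^{-i})\,T_c^{-j}$ with the inner factor already in $H$. Membership then hinges only on the stray $T_c^{\pm j}$ with $1\le j<k$, which I would absorb using the intersection pattern of $a,b,c$ (the available commutations among $T_a,T_b,T_c$), the relation $T_c^k\in H$, and $\iota$-conjugation, again grouping every surviving $T_c$ into a $k$-th power. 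For $k=2,3$ this is a short, finite verification that produces the stated simplification.

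I expect the main obstacle to be the middle step: exhibiting a word for $(T_bT_c)^6$ (and then for the $\gamma_{i,j}$) in which every occurrence of $T_c$ sits inside a $k$-th power. This is exactly where $k\le 3$ enters — for larger $k$ the separating twist genuinely requires the additional generators of Theorem~\ref{thm:genset_lmod} — so the reduction must be carried out by explicit computation for $k=2$ and $k=3$ rather than inferred from a uniform pattern.
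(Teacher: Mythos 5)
Your reduction is the right one---by Theorem~\ref{thm:genset_lmod} it suffices to show that $(T_bT_c)^6$ and the conjugates $g_{i,j}=T_c^jT_b^iT_a(T_bT_c)^6T_a^{-1}T_b^{-i}T_c^{-j}$ lie in $H=\langle T_a,T_b,T_c^k,\iota\rangle$, and this is exactly how the paper proceeds. But the two steps you defer are the entire content of the proof, and the route you sketch for the first one does not go through as stated. The kernel of the map $\map(S_{1,2})\to\map(S_1)$ forgetting \emph{both} marked points is not the point-pushing subgroup of a single marked point: it contains classes that permute the two marked points, and $z=\iota(T_bT_c)^3$ is such a class (the twists fix the marked points while $\iota$ swaps them). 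So the formula $push(\gamma)=T_{\gamma_1}^{-1}T_{\gamma_2}$ for the two boundary curves of an annular neighborhood does not apply to $z$, and there is no a priori reason that whatever expression one extracts would have all of its $T_c$'s grouped into $k$-th powers. The paper's actual mechanism is far more elementary and makes no use of $\iota$ or the Birman sequence: since $i(b,c)=1$, the braid relation gives
$(T_bT_c)^6=(T_cT_b)^6=\bigl(T_c(T_bT_cT_b)T_cT_b\bigr)^2=\bigl(T_c(T_cT_bT_c)T_cT_b\bigr)^2=(T_c^2T_b)^4$,
and a further shuffle gives $(T_c^2T_b)^4=(T_c^3T_b)^3$. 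These identities exhibit the separating twist directly as a word in $T_b$ and $T_c^2$ (resp.\ $T_c^3$), which is the whole point of the hypothesis $k\le 3$.

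For the conjugates, the key input your sketch is missing is the change-of-coordinates identity $T_cT_bT_a(b)=a$ and $T_cT_bT_a(c)=b$ (Lemma~\ref{lem:relation_dehn_twist}(ii)(c)), which yields $g_{1,1}=T_cT_bT_a(T_bT_c)^6T_a^{-1}T_b^{-1}T_c^{-1}=(T_aT_b)^6\in H$ in one stroke; this already finishes $k=2$. For $k=3$ the remaining three conjugates are reduced, by a fairly long chain of braid-relation manipulations, to $T_c^{\pm1}(T_aT_b)^6T_c^{\mp1}$ conjugated by elements of $H$, and then one needs the further identities $T_c(T_aT_b)^6T_c^{-1}=T_b^{-1}T_a^{-1}(T_bT_c)^6T_aT_b$ and $T_c^{-1}(T_aT_b)^6T_c=T_bT_a(T_bT_c)^6T_a^{-1}T_b^{-1}$ to absorb the stray single powers of $T_c$; the computation for $g_{2,2}$ in particular is not short. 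Your proposal correctly frames the problem but leaves these essential identities unproved and proposes a tool that does not directly apply, so as written it has a genuine gap at the decisive step.
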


Let $F_k\in \map(S_k)$ be the mapping class representing the $2\pi/k$-rotation of $S_k$ inducing the cover $p_k:S_k\to S_{1,2}$. As a corollary, we provide a finite generating set of $N_{\map(S_k)}(F_k)$ for $k=2,3$ (see Corollary~\ref{cor:genset_normalizer}) by lifting generating sets of $\lmap_{p_k}(S_{1,2})$ under the map $N_{\map(S_k)}(F_k)\to \lmap_{p_k}(S_{1,2})$.

\begin{cor*}
For curves shown in Figure~\ref{fig:s3_s12_cover}, we have
\[
N_{\map(S_2)}(F_2)=\langle F_2,T_{a_1}T_{a_2}, T_{b_1}T_{b_2},T_{c'},\hat{\iota} \rangle,
\]
where $\hat{\iota}\in \map(S_2)$ is the hyperelliptic involution, and
\[
N_{\map(S_3)}(F_3)=\langle F_3,T_{a_1}T_{a_2}T_{a_3}, T_{b_1}T_{b_2}T_{a_3},T_{c'},\iota_1\iota_2\iota_3 \rangle,
\]
where $\iota_1,\iota_2,\iota_3\in \map(S_3)$ are involutions represented by $\pi$-rotations as shown in Figure~\ref{fig:s3_s12_cover}.
\end{cor*}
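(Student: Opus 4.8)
The plan is to exploit the Birman--Hilden short exact sequence together with the generating set for $\lmap_{p_k}(S_{1,2})$ furnished by Proposition~\ref{prop:genset_lmod_23}. Since every cover has the Birman--Hilden property, we have $\smap_{p_k}(S_k)=N_{\map(S_k)}(F_k)$, and hence a short exact sequence
\[
1\to \langle F_k\rangle \to N_{\map(S_k)}(F_k)\xrightarrow{\,\pi\,} \lmap_{p_k}(S_{1,2})\to 1,
\]
where the kernel $\langle F_k\rangle=\mathrm{Deck}(p_k)$ is cyclic, generated by $F_k$. The key group-theoretic observation is that for any short exact sequence $1\to K\to G\to Q\to 1$, if $\{q_i\}$ generates $Q$ and $\widetilde{q_i}\in G$ is any lift of $q_i$, then $G=\langle K,\ \widetilde{q_i}\rangle$. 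Applied here, it suffices to produce one lift of each generator $T_a,T_b,T_c^k,\iota$ of $\lmap_{p_k}(S_{1,2})$ and adjoin $F_k$.

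First I would determine how each generator lifts by analysing the preimage of the corresponding curve under $p_k$. The mechanism is standard: if a simple closed curve $\gamma$ in $S_{1,2}$ has preimage a disjoint union of $k$ curves permuted cyclically by the deck group, then $T_\gamma$ lifts to the product of Dehn twists about all the preimage components; whereas if $p_k^{-1}(\gamma)$ is a single curve $\gamma'$ covering $\gamma$ with degree $k$, then $T_\gamma$ itself is not liftable, but $T_\gamma^{k}$ lifts to $T_{\gamma'}$ (a model computation on an annular neighbourhood shows the full twist upstairs covers the $k$-th power of the twist downstairs). Reading off Figure~\ref{fig:s3_s12_cover}, the curves $a$ and $b$ have split preimages while $c$ has a connected degree-$k$ preimage $c'$; this yields the lifts $T_{a_1}T_{a_2}$ and $T_{b_1}T_{b_2}$ (resp.\ $T_{a_1}T_{a_2}T_{a_3}$ and $T_{b_1}T_{b_2}T_{a_3}$) of $T_a$ and $T_b$, together with the lift $T_{c'}$ of $T_c^{k}$.

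For the hyperelliptic involution $\iota$, I would check directly that the symmetry of $S_k$ displayed in Figure~\ref{fig:s3_s12_cover} --- the involution $\widehat{\iota}$ when $k=2$, and the product $\iota_1\iota_2\iota_3$ of the three $\pi$-rotations when $k=3$ --- is fiber-preserving, that is, it commutes with $F_k$, and that it projects to $\iota$ under $\pi$. Since any two lifts of a given element differ by a power of $F_k$, and $F_k$ is already on our list, the choice of lift is immaterial. Combining, $N_{\map(S_k)}(F_k)$ is generated by $F_k$ together with the displayed lifts, which is exactly the asserted generating set.

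The main obstacle is the geometric bookkeeping in the second and third steps: correctly reading the topological type of the preimage of each curve (connected versus split, and the cyclic permutation action of the deck group) from the branched-cover structure determined by $F_k$, and verifying that each proposed element is genuinely fiber-preserving and projects to the intended generator. This is where the explicit geometry of the cover $p_k$ for the small values $k=2,3$ is essential; the argument does not obviously extend to larger $k$, since the simplified generating set of Proposition~\ref{prop:genset_lmod_23} is itself specific to $k=2,3$.
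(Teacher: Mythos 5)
Your proposal is correct and follows essentially the same route as the paper: the paper likewise invokes the short exact sequence $1\to\langle F_k\rangle\to N_{\map(S_k)}(F_k)\to\lmap_{p_k}(S_{1,2})\to 1$ together with Proposition~\ref{prop:genset_lmod_23}, and then simply records that $a$ and $b$ lift to their $k$ disjoint preimage components, $c'$ is the connected preimage of $c$ (so $T_{c'}$ lifts $T_c^k$), and $\iota$ lifts to $\hat{\iota}$ (resp.\ $\iota_1\iota_2\iota_3$). Your write-up just makes explicit the group-theoretic lifting-of-generators step and the annulus computation that the paper leaves implicit.
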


For $p:S_{k(g-1)+1}\to S_g$, in~\cite[Theorem 2.4]{dhanwani_liftable2}, it was proved that $\lmap_p(S_g)$ is maximal in $\map(S_g)$ if and only if $k$ is prime. In Section~\ref{sec:maximal}, we conclude the paper with a similar result (see Theorem~\ref{thm:maximal}).

\begin{thm*}
Consider the cover $p_k:S_k\to S_{1,2}$. Then $\lmap_{p_k}(S_{1,2})$ is maximal in $\map(S_{1,2})$ if and only if $k$ is prime.
\end{thm*}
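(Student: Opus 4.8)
The plan is to reduce the statement to a question about finite matrix groups via the homology representation $\Psi$, and thence to the maximality of a point stabilizer. The crucial preliminary observation is that $\ker\Psi\leq\lmap_{p_k}(S_{1,2})$ for every $k$: the cover $p_k$ is cyclic, hence determined by a homomorphism $\bar\rho\colon H_1(S_{1,2};\Z)\to\Z/k\Z$, and a mapping class lifts exactly when its $\Psi$-image preserves $\ker\bar\rho$, which any class acting trivially on homology does. Thus $\ker\Psi\leq\lmap_{p_k}(S_{1,2})\leq\map(S_{1,2})$, and by the correspondence theorem $\lmap_{p_k}(S_{1,2})$ is maximal in $\map(S_{1,2})$ if and only if $G:=\Psi(\map(S_{1,2}))$ contains $H:=\Psi(\lmap_{p_k}(S_{1,2}))$ as a maximal subgroup. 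By Proposition~\ref{prop:genset_image} we have $G=\langle\Psi(T_a),\Psi(T_b),\Psi(T_c),\Psi(\iota)\rangle$ and $H=\langle\Psi(T_a),\Psi(T_b),\Psi(T_c)^k,\Psi(\iota)\rangle$, so $G=\langle H,\Psi(T_c)\rangle$.

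For the ``only if'' direction I would argue by contraposition using intermediate covers. If $k$ is composite, choose a divisor $d$ with $1<d<k$; then $\bar\rho$ reduces to $\bar\rho_d\colon H_1(S_{1,2};\Z)\to\Z/d\Z$ defining an intermediate cyclic cover $p_d$, and any class liftable under $p_k$ is liftable under $p_d$, so $\lmap_{p_k}(S_{1,2})\leq\lmap_{p_d}(S_{1,2})$. A direct computation with the transvection matrices shows $T_c\notin\lmap_{p_d}(S_{1,2})$ (so $\lmap_{p_d}(S_{1,2})\neq\map(S_{1,2})$) while $T_c^{\,d}\in\lmap_{p_d}(S_{1,2})\setminus\lmap_{p_k}(S_{1,2})$ (so the first containment is proper). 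Hence $\lmap_{p_k}(S_{1,2})$ lies strictly between two proper subgroups and is not maximal.

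For the ``if'' direction, assume $k=p$ is prime and reduce $\Psi$ modulo $p$. Since membership in $H$ is detected by the action on $H_1(S_{1,2};\mathbb{F}_p)$, the subgroup $H$ is the full preimage in $G$ of $\bar H:=\mathrm{Stab}_{\bar G}(\langle\bar\rho\rangle)$ under $G\to\bar G\leq\mathrm{GL}_3(\mathbb{F}_p)$, and as the kernel of this reduction lies in $H$, the correspondence theorem reduces the claim to: $\bar H$ is maximal in $\bar G$. The main computation is to identify $\bar G$ explicitly. In the basis $([b],[a],\delta)$ of $H_1(S_{1,2};\mathbb{F}_p)$, where $\delta$ spans the radical of the intersection form (the puncture class), every element of $\bar G$ is block lower triangular $\left(\begin{smallmatrix}A&0\\ u&\epsilon\end{smallmatrix}\right)$ with $A\in\mathrm{SL}_2(\mathbb{F}_p)$, $u\in\mathbb{F}_p^2$, $\epsilon\in\{\pm1\}$; the images of $T_a,T_b$ realize all of $\mathrm{SL}_2(\mathbb{F}_p)$ in the $A$-block, the image of $T_c$ has nonzero $u$, and conjugating it by the $\mathrm{SL}_2(\mathbb{F}_p)$-part fills the entire unipotent block. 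Hence $\bar G=\bar H\ltimes U$, where $\bar H$ is the block-diagonal subgroup and $U\cong\mathbb{F}_p^2$ is the lower unipotent block, on which $\bar H$ acts through the (twisted) standard representation of $\mathrm{SL}_2(\mathbb{F}_p)$.

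Because $\bar G=\bar H\ltimes U$ with $U$ abelian and normal, the intermediate subgroups $\bar H\leq K\leq\bar G$ are exactly $K=\bar H\ltimes V$ for $\bar H$-submodules $V\leq U$. For $p$ prime the standard $\mathrm{SL}_2(\mathbb{F}_p)$-module $\mathbb{F}_p^2$ is irreducible, so the only submodules are $0$ and $U$; therefore the only intermediate subgroups are $\bar H$ and $\bar G$, and $\bar H$ is maximal, whence so is $\lmap_{p_k}(S_{1,2})$. I expect the principal obstacle to be the explicit determination of $\bar G=\Psi(\map(S_{1,2}))\bmod p$ — in particular verifying that the unipotent block is entirely realized, so that $\bar G$ is genuinely the full semidirect product and maximality becomes irreducibility of the standard module. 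Once this structural description is in hand both directions follow quickly, and the prime-versus-composite dichotomy is precisely the distinction between $\mathbb{F}_p^2$ being irreducible and $(\Z/k\Z)^2$ carrying the proper invariant submodules $d(\Z/k\Z)^2$ indexed by divisors $d\mid k$.
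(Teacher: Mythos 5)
Your proposal is correct, and while the overall skeleton matches the paper's (both directions are pushed through the homology representation $\Psi$, using that $\ker\Psi\subseteq\lmap_{p_k}(S_{1,2})$ so that the correspondence theorem reduces everything to the matrix groups $H=\Psi(\lmap_{p_k}(S_{1,2}))\leq G=\Psi(\map(S_{1,2}))$, and both reduce modulo $k$ for the prime case), your argument for the ``if'' direction is genuinely different in its key step. The paper proves maximality of $\Psi_k(\lmap_{p_k}(S_{1,2}))$ by brute force: it invokes a lemma stating that a finite-index subgroup is maximal iff adjoining any nontrivial coset representative generates the whole group, writes down the $k^2$ coset representatives $h_{m,n}=\Psi_k[(T_b^{-1}T_a^{-1}T_cT_b)^m(T_a^{-1}T_c)^n]$ explicitly, and for each nontrivial one runs a $\gcd$/row-reduction computation (via its Lemma~\ref{lem:gcd_sl2z}) to recover $\Psi_k(T_c)$. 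You instead observe that $\bar G=\bar H\ltimes U$ with $U\cong\mathbb{F}_p^2$ abelian and normal, so that by Dedekind's modular law every intermediate subgroup is $\bar H\ltimes V$ for an $\bar H$-submodule $V\leq U$, and maximality becomes irreducibility of the standard $\mathrm{SL}_2(\mathbb{F}_p)$-module --- which holds for every prime, including $p=2$ (where $\mathrm{SL}_2(\mathbb{F}_2)$ acts transitively on the three nonzero vectors). This is cleaner and more conceptual, replaces the case-by-case matrix manipulation with a one-line module-theoretic fact, and makes transparent why the composite case fails (the invariant submodules $d(\Z/k\Z)^2$). Your ``only if'' direction is the same as the paper's: your intermediate group $\lmap_{p_d}(S_{1,2})$ has $\Psi$-image exactly the paper's subgroup $K$. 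The only point to make explicit in a write-up is the verification that $\bar G$ is the full block-lower-triangular group and $\bar H$ the full block-diagonal group, but this is immediate from Lemma~\ref{lem:image_psi} together with surjectivity of $\mathrm{SL}_2(\Z)\to\mathrm{SL}_2(\Z_p)$, so there is no gap.
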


\section{Prelimnaries}
\label{sec:prelim}
Let $S_{h,n}^{\circ}$ be the surface obtained by removing marked points from $S_{h,n}$. For an abelian finite group $G$, from covering space theory, a surjective map $H_1(S_{h,n}^{\circ};\Z)\to G$ defines a finite-sheeted regular branched cover $S_g\to S_{h,n}$ for some $g$ with Deck transformation group isomorphic to $G$. In this paper, we consider the $k$-sheeted cyclic cover $p_k:S_k \to S_{1,2}$ induced by $2\pi/k$-rotation of a flower shaped $S_k$ with $k$-petals (see Figure~\ref{fig:s3_s12_cover} for $k=2,3$). Let $c_1$, $c_2$, and $c_3$ be the homology classes of simple closed curves $a$, $b$, and $d$, respectively, on $S_{1,2}$ as shown in Figure~\ref{fig:s3_s12_cover}. Then $H_1(S_{1,2}^{\circ};\Z)=\langle c_1,c_2,c_3 \rangle \cong \Z^3$. For $k\geq 2$, the map $\eta_k:H_1(S_{1,2}^{\circ};\Z)\to \Z_k$ given as $\eta_k: n_1c_1+n_2c_2+n_3c_3\mapsto n_3 \pmod k$ defines the cyclic $k$-sheeted regular branched cyclic cover $p_k:S_k\to S_{1,2}$.

Ghaswala~\cite[Section 5.1.3]{ghaswala_thesis}, in his PhD thesis, derived a finite presentation for $\lmap_{p_k}(S_{1,2})$ when $k=2,3,4$ and a finite generating set when $k=5,6$ using Reidemeister-Schreier rewriting process on SageMath. We aim to derive a finite generating set for $\lmap_{p_k}(S_{1,2})$ for all $k\geq 2$. To achieve our aim, we use the homology representation $\Psi:\map(S_{1,2})\to \mathrm{GL}_3(\Z)$ of $\map(S_{1,2})$ induced by the action of $\map(S_{1,2})$ on $H_1(S_{1,2};\Z)$. Now, we state some results obtained by Ghaswala~\cite[Lemma 5.1.3 and 5.1.4]{ghaswala_thesis} describing $\Psi(\map(S_{1,2}))$ and $\Psi(\lmap_{p_k}(S_{1,2}))$. For a simple closed curve $c$, $T_c$ will denote the left-handed Dehn twist about $c$. Let $\iota\in \map(S_{1,2})$ be the hyperelliptic involution permuting the marked points on $S_{1,2}$ (see Figure~\ref{fig:s3_s12_cover}).

\begin{lem}
\label{lem:image_twists}
For the map $\Psi:\map(S_{1,2})\to \mathrm{GL}_3(\Z)$ and simple closed curves $a,b,c$ in Figure~\ref{fig:s3_s12_cover}, we have
\[
\Psi(T_a)=
\begin{pmatrix}
1 & 1 & 0 \\ 
0 & 1 & 0 \\ 
0 & 0 & 1
\end{pmatrix},
\Psi(T_b)=
\begin{pmatrix}
1 & 0 & 0 \\ 
-1 & 1 & 0 \\ 
0 & 0 & 1
\end{pmatrix},
\Psi(T_c)=
\begin{pmatrix}
1 & 1 & 0 \\ 
0 & 1 & 0 \\ 
0 & 1 & 1
\end{pmatrix},
\Psi(\iota)=
\begin{pmatrix}
-1 & 0 & 0 \\ 
0 & -1 & 0 \\ 
0 & 0 & -1
\end{pmatrix}.
\]
\end{lem}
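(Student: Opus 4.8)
The plan is to read each matrix directly off the induced action on $H_1(S_{1,2}^\circ;\Z)\cong\Z^3$, taken with the ordered basis $(c_1,c_2,c_3)=([a],[b],[d])$, so that the $j$-th column of $\Psi(f)$ records $f_*(c_j)$. For the three Dehn twists this reduces to one input, the Picard--Lefschetz formula
\[
(T_\gamma)_*(x)=x+\langle x,[\gamma]\rangle\,[\gamma],
\]
where $\langle\cdot,\cdot\rangle$ is the algebraic intersection pairing and the global sign is fixed by the left-handed convention for $T_\gamma$. Thus each twist matrix is determined once I know the class $[\gamma]$ and its algebraic intersections with $a$, $b$, and $d$.

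The second step is to extract this data from Figure~\ref{fig:s3_s12_cover}. Reading off the picture, $a$ and $b$ form a symplectic pair on the genus-$1$ part with $\langle a,b\rangle=-1$, while $d$ is a loop around a marked point and hence lies in the radical of the intersection form, so $\langle a,d\rangle=\langle b,d\rangle=0$; moreover $c$ is homologous to $a+d$, giving $[c]=c_1+c_3$. Substituting $[a]=c_1$, $[b]=c_2$, and $[c]=c_1+c_3$ into the formula and evaluating on the basis then yields the three displayed matrices: $(T_a)_*$ adds $c_1$ to $c_2$ alone, $(T_b)_*$ subtracts $c_2$ from $c_1$ alone, and $(T_c)_*$ sends $c_2\mapsto c_1+c_2+c_3$ while fixing $c_1$ and $c_3$. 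For the hyperelliptic involution $\iota$, I would use that the underlying $\pi$-rotation is orientation-preserving and acts as $-1$ on the homology of the torus, reversing $a$ and $b$; since $\iota$ swaps the two marked points it carries $d$ to a loop around the other marked point, whose class is $-c_3$ because the two puncture classes sum to zero in $H_1(S_{1,2}^\circ;\Z)$. Hence $\iota_*(c_3)=-c_3$ and $\Psi(\iota)=-I_3$.

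The main obstacle I anticipate is sign bookkeeping rather than any conceptual difficulty. One must fix the handedness of the twist and the orientation of the intersection form consistently, so that the asymmetry between the $+1$ entry of $\Psi(T_a)$ and the $-1$ entry of $\Psi(T_b)$ faithfully reflects the antisymmetry $\langle a,b\rangle=-\langle b,a\rangle$, and so that $[c]=c_1+c_3$ with $\langle a,d\rangle=\langle b,d\rangle=0$ is read correctly off the figure. A second subtlety is the entry $\iota_*(c_3)=-c_3$: since a $\pi$-rotation is orientation-preserving it would \emph{fix} the class of a loop around a fixed marked point, so this sign relies crucially on $\iota$ genuinely permuting the two marked points.
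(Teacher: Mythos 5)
Your proposal is correct, but it goes by a genuinely different (and more self-contained) route than the paper: the paper simply cites Ghaswala's thesis \cite[Lemma 5.1.2]{ghaswala_thesis} for the three twist matrices and only argues directly for $\Psi(\iota)=-I_{3\times 3}$, whereas you actually derive all four matrices from the transvection (Picard--Lefschetz) formula together with the homological data $[a]=c_1$, $[b]=c_2$, $[c]=c_1+c_3$, and $[d]=c_3$ a puncture class in the radical of the intersection pairing. Your choices of convention are the right ones: taking the $j$-th column of $\Psi(f)$ to be $f_*(c_j)$ makes $\Psi$ a homomorphism, and the resulting matrices are consistent with the rest of the paper (for instance, multiplying them out reproduces the formula for $\Psi(T_c^jT_b^kT_c^{-j})$ used in Theorem~\ref{thm:genset_lmod}), while your sign $\langle a,b\rangle=-1$ is exactly the one forced by the left-handed twist convention and the displayed asymmetry between $\Psi(T_a)$ and $\Psi(T_b)$. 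Your treatment of $\iota$ is in fact more informative than the paper's: the paper asserts $\iota(c_3)=-c_3$ without comment, and your observation that this sign comes precisely from $\iota$ swapping the two marked points together with the relation that the two puncture classes sum to zero in $H_1(S_{1,2}^{\circ};\Z)$ is the missing justification (an involution fixing each marked point would instead fix $c_3$). The only caveat is that the inputs $[c]=c_1+c_3$ and the orientation of $d$ must genuinely be read off Figure~\ref{fig:s3_s12_cover}; your readings are the ones consistent with the claimed matrices and with the identity $push(\bar a)=T_c^{-1}T_a$ used later, so the argument is sound.
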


\begin{proof}
The images of Dehn twists $T_a$, $T_b$, and $T_c$ was described in~\cite[Lemma 5.1.2]{ghaswala_thesis}. For the image of $\iota$, we observe that $\iota (x)=-x$ for $x=c_1,c_2,c_3$. Therefore, $\Psi(\iota)=-I_{3\times 3}$, where $I_{3\times 3}$ is the $3\times 3$ identity matrix.
\end{proof}

\begin{lem}
\label{lem:image_psi}
The image of $\map(S_{1,2})$ and $\lmap_{p_k}(S_{1,2})$ under the map $\Psi:\map(S_{1,2})\to \mathrm{GL}_3(\Z)$ are
\[
\Psi(\map(S_{1,2}))=
\left\{
\begin{pmatrix}
A & 0 \\ 
v & \pm1
\end{pmatrix}
\Big|~
A\in \mathrm{SL}_2(\Z), v=(m,n)\in \Z\times \Z
\right\}
\]
and
\[
\Psi(\lmap_{p_k}(S_{1,2}))=
\left\{
\begin{pmatrix}
A & 0 \\ 
v & \pm1
\end{pmatrix}
\Big|~
A\in \mathrm{SL}_2(\Z), v=(m,n)\in k\Z\times k\Z
\right\}.
\]
\end{lem}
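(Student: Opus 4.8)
The plan is to identify the right-hand side, call it $H=\bigl\{\begin{pmatrix} A & 0 \\ v & \pm1\end{pmatrix}: A\in\mathrm{SL}_2(\Z),\,v\in\Z^2\bigr\}$, as the image by proving both inclusions, the first structurally and the second by exhibiting explicit preimages from Lemma~\ref{lem:image_twists}. For the inclusion $\Psi(\map(S_{1,2}))\subseteq H$, I would use that every mapping class is orientation-preserving, so $\Psi(f)$ preserves the algebraic intersection pairing $\hat\imath$ on $H_1(S_{1,2}^{\circ};\Z)=\langle c_1,c_2,c_3\rangle$. Since $d$ encircles a single marked point, $c_3$ is a peripheral class spanning the radical of $\hat\imath$, while $\hat\imath$ restricts to the standard symplectic form on $\langle c_1,c_2\rangle$. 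Hence $\Psi(f)$ preserves $\langle c_3\rangle$; as $\Psi(f)\in\mathrm{GL}_3(\Z)$ this forces $c_3\mapsto\pm c_3$, giving the vanishing upper-right block and the $\pm1$ in the lower-right corner. The induced automorphism of $H_1(S_{1,2}^{\circ};\Z)/\langle c_3\rangle\cong H_1(S_1;\Z)\cong\Z^2$ is the action of the mapping class obtained by filling in the marked points, which lies in $\mathrm{SL}_2(\Z)$ because it is orientation-preserving (even when the two points are interchanged). This yields $A\in\mathrm{SL}_2(\Z)$ and shows $\Psi(f)\in H$.

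For the reverse inclusion I would realize a generating set of $H$ using the matrices of Lemma~\ref{lem:image_twists}, all of which genuinely lie in $\Psi(\map(S_{1,2}))$. The upper-left blocks of $\Psi(T_a)$ and $\Psi(T_b)$ are $\begin{pmatrix} 1 & 1 \\ 0 & 1\end{pmatrix}$ and $\begin{pmatrix} 1 & 0 \\ -1 & 1\end{pmatrix}$, which generate $\mathrm{SL}_2(\Z)$, so all $\begin{pmatrix} A & 0 \\ 0 & 1\end{pmatrix}$ are in the image. Next, $\Psi(T_c)\Psi(T_a)^{-1}=\begin{pmatrix} I_2 & 0 \\ (0,1) & 1\end{pmatrix}$ is a pure translation; conjugating it by the already-realized blocks sends $(0,1)$ to $(0,1)B^{-1}$, and since $\mathrm{SL}_2(\Z)$ acts transitively on primitive vectors (which generate $\Z^2$) and translations compose additively, every $\begin{pmatrix} I_2 & 0 \\ v & 1\end{pmatrix}$ with $v\in\Z^2$ is obtained. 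Finally $\Psi(\iota)=-I_3$ supplies the sign, as $\begin{pmatrix} I_2 & 0 \\ 0 & -1\end{pmatrix}=(-I_3)\begin{pmatrix} -I_2 & 0 \\ 0 & 1\end{pmatrix}$ with $-I_2\in\mathrm{SL}_2(\Z)$. The factorization $\begin{pmatrix} A & 0 \\ v & \epsilon\end{pmatrix}=\begin{pmatrix} I_2 & 0 \\ vA^{-1} & 1\end{pmatrix}\begin{pmatrix} A & 0 \\ 0 & 1\end{pmatrix}\begin{pmatrix} I_2 & 0 \\ 0 & \epsilon\end{pmatrix}$ then gives $H\subseteq\Psi(\map(S_{1,2}))$, completing the first equality.

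For the second equality I would invoke the algebraic liftability criterion of Broughton~\cite{broughton92}: since $p_k$ is the finite abelian regular cover determined by $\eta_k\colon H_1(S_{1,2}^{\circ};\Z)\to\Z_k$, $n_1c_1+n_2c_2+n_3c_3\mapsto n_3\bmod k$, a mapping class $f$ lies in $\lmap_{p_k}(S_{1,2})$ exactly when $\Psi(f)$ preserves $\ker\eta_k$, i.e.\ $\eta_k\circ\Psi(f)=\alpha\,\eta_k$ for some $\alpha\in\mathrm{Aut}(\Z_k)$. Writing $\Psi(f)=\begin{pmatrix} A & 0 \\ v & \epsilon\end{pmatrix}$, the third coordinate of $\Psi(f)(n_1,n_2,n_3)^{T}$ is $v_1n_1+v_2n_2+\epsilon n_3$, so the condition reads $v_1n_1+v_2n_2+\epsilon n_3\equiv\alpha n_3\pmod k$ for all $(n_1,n_2,n_3)$. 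Specializing the $n_i$ forces $v_1\equiv v_2\equiv0\pmod k$ (and $\alpha=\epsilon$, a unit), that is $v\in k\Z\times k\Z$; conversely any such matrix satisfies the congruence. Because every matrix of $H$ is already in $\Psi(\map(S_{1,2}))$ by the first part, this yields $\Psi(\lmap_{p_k}(S_{1,2}))=\{\,\Psi(f): v\in k\Z\times k\Z\,\}$, the asserted set; this is also consistent with $\ker\Psi\subseteq\lmap_{p_k}(S_{1,2})$, since any $f$ acting trivially on $H_1$ preserves $\ker\eta_k$.

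The step I expect to be most delicate is the upper bound $\Psi(\map(S_{1,2}))\subseteq H$: one must argue carefully that $\langle c_3\rangle$ is precisely the radical of $\hat\imath$ (so the upper-right block is forced to vanish and $c_3\mapsto\pm c_3$ rather than to an arbitrary multiple) and that the induced map on torus homology has determinant $+1$ even when the marked points are swapped. Once the block shape $\begin{pmatrix} A & 0 \\ v & \pm1\end{pmatrix}$ with $A\in\mathrm{SL}_2(\Z)$ is secured, the surjectivity computation and the mod-$k$ reduction for $\lmap_{p_k}(S_{1,2})$ are routine, the only external ingredient being the liftability criterion, whose hypotheses are met by the finite cyclic cover $p_k$.
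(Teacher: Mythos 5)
Your proposal is correct, but there is nothing in the paper to compare it against: Lemma~\ref{lem:image_psi} is stated without proof and is imported from Ghaswala's thesis (Lemmas 5.1.3--5.1.4), so your argument serves as a self-contained replacement rather than an alternative route. The three pieces all check out. The upper bound $\Psi(\map(S_{1,2}))\subseteq H$ via the radical of the intersection pairing (which is exactly $\langle c_3\rangle$, since $\hat\imath(c_1,c_2)=1$ and $c_3$ pairs trivially with everything), plus the forgetful homomorphism to $\map(S_1)\cong\mathrm{SL}_2(\Z)$ for the determinant of the upper block, is the right structural argument; since $\Psi(f)$ and $\Psi(f)^{-1}$ both preserve the radical, the restriction to $\langle c_3\rangle$ is an automorphism of $\Z$, giving the $\pm1$. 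Your generator bookkeeping for the reverse inclusion is consistent with Lemma~\ref{lem:image_twists}: $\Psi(T_c)\Psi(T_a)^{-1}$ is indeed the elementary translation with $v=(0,1)$, conjugation by the block-diagonal copy of $\mathrm{SL}_2(\Z)$ sweeps out all primitive $v$, and translations add. For the liftable image, reducing Broughton's criterion to the congruence $v\equiv 0\pmod k$ is precisely the computation the paper later relies on implicitly (e.g.\ when it declares $T_c^jT_b^kT_c^{-j}$ liftable in Theorem~\ref{thm:genset_lmod} by inspecting the bottom row). The only point worth making explicit in a write-up is that liftability under the branched cover $p_k$ means liftability of the restriction to $S_{1,2}^{\circ}$ under the associated unbranched cover determined by $\eta_k$, which is the setting in which Broughton's criterion applies; with that said, your identification of the condition as $v\in k\Z\times k\Z$ (with $\alpha=\epsilon$ automatically a unit) is complete.
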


As a corollary of Lemma~\ref{lem:image_psi}, it follows that the index of $\lmap_{p_k}(S_{1,2})$ in $\map(S_{1,2})$ is $k^2$ (see~\cite[Corollary 5.1.6]{ghaswala_thesis}). To derive a generating set for $\lmap_{p_k}(S_{1,2})$, we derive generating sets for $\ker \Psi$ and $\Psi(\lmap_{p_k}(S_{1,2}))$. A generating set for $\ker \Psi$ will be described in the next section. In the following result, we provide a finite generating set for $\Psi(\lmap_{p_k}(S_{1,2}))$. We will need the following elementary lemma.

\begin{lem}
\label{lem:gcd_sl2z}
For $m,n\in \Z$ such that $\ell=\gcd(m,n)$, there exists $A\in \mathrm{SL}_2(\Z)$ such that $(m,n)A=(0,\ell)$.
\end{lem}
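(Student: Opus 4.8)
The plan is to reduce the statement to B\'ezout's identity together with the standard fact that a primitive integer vector can be completed to a basis of $\Z^2$. First I would dispose of the degenerate case $m=n=0$, where $\ell=0$ and the $2\times 2$ identity matrix works, so I may assume $(m,n)\neq(0,0)$. Writing $m=\ell m'$ and $n=\ell n'$, the defining property of the gcd gives $\gcd(m',n')=1$, so $(m',n')$ is a primitive row vector. Everything then reduces to sending the primitive vector $(m',n')$ to $(0,1)$ by right multiplication by an element of $\mathrm{SL}_2(\Z)$; multiplying the resulting identity through by $\ell$ recovers the claim for $(m,n)$.

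Next I would invoke B\'ezout: since $\gcd(m',n')=1$, there exist $p,q\in\Z$ with $m'p+n'q=1$. I would then exhibit the explicit matrix
\[
A=\begin{pmatrix} n' & p \\ -m' & q \end{pmatrix},
\]
whose determinant is $n'q+m'p=1$, so that $A\in\mathrm{SL}_2(\Z)$. A direct computation gives
\[
(m,n)A=\bigl(mn'-nm',\ mp+nq\bigr)=\bigl(\ell m'n'-\ell n'm',\ \ell(m'p+n'q)\bigr)=(0,\ell),
\]
which is exactly what is required.

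The only point that needs care is the sign bookkeeping that forces the determinant to be $+1$ rather than $-1$: placing $-m'$ (not $m'$) in the lower-left entry is what makes the cofactor expansion collapse to $m'p+n'q$. I expect no genuine obstacle here, as this is a routine elementary fact. The only alternative I would weigh is a constructive argument running the Euclidean algorithm on $(m,n)$ and recording each column operation as right multiplication by an elementary matrix $\begin{pmatrix}1&t\\0&1\end{pmatrix}$ or $\begin{pmatrix}1&0\\t&1\end{pmatrix}$ in $\mathrm{SL}_2(\Z)$, whose product realizes the desired $A$; but the B\'ezout construction above is shorter and self-contained, so I would present that.
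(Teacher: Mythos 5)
Your proof is correct and is essentially the same as the paper's: the paper also applies B\'ezout to get $mr+ns=\ell$ and writes down the matrix $\begin{pmatrix} n/\ell & r \\ -m/\ell & s\end{pmatrix}$, which is exactly your $A$ with $n'=n/\ell$, $m'=m/\ell$, $p=r$, $q=s$. Your treatment of the degenerate case $m=n=0$ and the explicit determinant check are small additions the paper omits, but the argument is identical.
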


\begin{proof}
Since $\ell=\gcd(m,n)$, there are $r,s\in \Z$ such that $mr+ns=\ell$. For
\[
A=\begin{pmatrix}
n/\ell & r \\ 
-m/\ell & s
\end{pmatrix}\in \mathrm{SL}_2(\Z)
\]
we have $(m,n)A=(0,\ell)$.
\end{proof}

\begin{prop}
\label{prop:genset_image}
For the map $\Psi:\map(S_{1,2})\to \mathrm{GL}_3(\Z)$ and curves shown in Figure~\ref{fig:s3_s12_cover}, we have $$\Psi(\lmap_{p_k}(S_{1,2}))=\langle \Psi(T_a),\Psi(T_b),\Psi(T_c^k),\Psi(\iota)\rangle.$$
\end{prop}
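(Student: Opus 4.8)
Write $G=\langle \Psi(T_a),\Psi(T_b),\Psi(T_c^k),\Psi(\iota)\rangle$, and recall from Lemma~\ref{lem:image_psi} that every element of $\Psi(\lmap_{p_k}(S_{1,2}))$ has the block form $\begin{pmatrix} A & 0 \\ v & \epsilon \end{pmatrix}$ with $A\in\mathrm{SL}_2(\Z)$, $v=(m,n)\in k\Z\times k\Z$, and $\epsilon=\pm1$, where $A$ is the upper-left $2\times2$ block, $v$ the lower-left row, and $\epsilon$ the corner entry. The plan is to verify $G\subseteq\Psi(\lmap_{p_k}(S_{1,2}))$ by inspection, and then to manufacture an arbitrary element of the target group out of the four generators, using the semidirect-product structure of this matrix group.

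For $G\subseteq\Psi(\lmap_{p_k}(S_{1,2}))$, I would read the generators off Lemma~\ref{lem:image_twists} and check each has the prescribed form. The only nontrivial computation is $\Psi(T_c^k)$: writing $\Psi(T_c)$ in block form with upper-left block $A_0=\begin{pmatrix}1&1\\0&1\end{pmatrix}$ and lower-left row $w_0=(0,1)$, the block-power identity gives lower-left row $w_0\sum_{j=0}^{k-1}A_0^{\,j}=(0,k)$, so that
\[
\Psi(T_c^k)=\begin{pmatrix} 1 & k & 0 \\ 0 & 1 & 0 \\ 0 & k & 1 \end{pmatrix},
\]
whose lower-left row $(0,k)$ lies in $k\Z\times k\Z$; the other three generators are immediate.

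For the reverse inclusion, I would first establish two structural facts: (i) $\langle\Psi(T_a),\Psi(T_b)\rangle$ is exactly the set of matrices $\begin{pmatrix} A & 0 \\ 0 & 1\end{pmatrix}$ with $A\in\mathrm{SL}_2(\Z)$, since the upper-left blocks $\begin{pmatrix}1&1\\0&1\end{pmatrix}$ and $\begin{pmatrix}1&0\\-1&1\end{pmatrix}$ are the standard generators of $\mathrm{SL}_2(\Z)$ while the lower-left row and corner remain $0$ and $1$; and (ii) the product $\Psi(T_c^k)\Psi(T_a)^{-k}$ equals the pure translation $\tau=\begin{pmatrix} I & 0 \\ (0,k) & 1\end{pmatrix}$, obtained by cancelling the upper-left block $A_0^{\,k}=\begin{pmatrix}1&k\\0&1\end{pmatrix}$ of $\Psi(T_c^k)$ against $\Psi(T_a)^{-k}$. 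Then, for a target matrix $M=\begin{pmatrix} A & 0 \\ v & \epsilon\end{pmatrix}$, I would use $\Psi(\iota)=-I$ to reduce to $\epsilon=1$ (replacing $M$ by $\Psi(\iota)M=-M$, which negates $A$ and $v$ while keeping them in $\mathrm{SL}_2(\Z)$ and $k\Z\times k\Z$), and factor $M=\begin{pmatrix} I & 0 \\ vA^{-1} & 1\end{pmatrix}\begin{pmatrix} A & 0 \\ 0 & 1\end{pmatrix}$. The second factor lies in $G$ by (i); for the first, set $w:=vA^{-1}\in k\Z\times k\Z$ and apply Lemma~\ref{lem:gcd_sl2z} to get $B\in\mathrm{SL}_2(\Z)$ with $wB=(0,\ell)$, $\ell=\gcd$ of the entries of $w$. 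Since $k\mid\ell$ we have $(0,\ell)=(0,k)\cdot(\ell/k)$, so $\begin{pmatrix} I & 0 \\ w & 1\end{pmatrix}$ is the conjugate of $\tau^{\ell/k}$ by $\begin{pmatrix} B & 0 \\ 0 & 1\end{pmatrix}$ and hence lies in $G$ by (i) and (ii).

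The main obstacle, and really the heart of the argument, is step (ii): recognizing that the single extra generator $\Psi(T_c^k)$ contributes exactly the translation by $(0,k)$ once its $\mathrm{SL}_2(\Z)$-part is stripped off by powers of $\Psi(T_a)$, and that conjugation by the copy of $\mathrm{SL}_2(\Z)$ from (i) then spreads this one translation across the entire lattice $k\Z\times k\Z$. Verifying the block-power formula for $\Psi(T_c^k)$ and tracking which conjugates realize which lower-left rows are the only places that require care; everything else is routine matrix algebra.
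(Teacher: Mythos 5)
Your proof is correct and follows essentially the same route as the paper: verify the generators lie in the target group via Lemma~\ref{lem:image_psi}, reduce to corner entry $+1$ using $\Psi(\iota)$, use Lemma~\ref{lem:gcd_sl2z} to rotate the lower-left row to $(0,\ell)$ with $k\mid\ell$, cancel it with a power of $\Psi(T_c^k)$, and finish with the fact that $\Psi(T_a),\Psi(T_b)$ generate the $\mathrm{SL}_2(\Z)$ block. Your packaging as a factorization $M=(\text{translation})\cdot(\mathrm{SL}_2\text{-part})$ with the translation obtained by conjugating $\tau=\Psi(T_c^k)\Psi(T_a)^{-k}$ is just a reorganization of the paper's successive right-multiplications, not a different argument.
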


\begin{proof}
From Lemma~\ref{lem:image_psi}, it follows that $T_a,T_b,T_c^k,\iota \in \lmap_{p_k}(S_{1,2})$. Consider a matrix $X\in \Psi(\lmap_{p_k}(S_{1,2}))$. Up to a multiplication by $\Psi(\iota)$, we can assume that
\[
X=\begin{pmatrix}
A & 0 \\ 
v & 1
\end{pmatrix}, 
\]
where $A\in \mathrm{SL}_2(\Z)$ and $v=(m,n)\in k\Z\times k\Z$. By Lemma~\ref{lem:gcd_sl2z}, for $p=\gcd(m,n)$, there is a $B\in \mathrm{SL}_2(\Z)$ such that $(0,p)=(m,n)B$. We have
\[
\begin{pmatrix}
A & 0 \\ 
v & 1
\end{pmatrix}
\begin{pmatrix}
B & 0 \\ 
0 & 1
\end{pmatrix}=
\begin{pmatrix}
 AB & 0 \\ 
 u & 1
 \end{pmatrix}=:X_1 ,
\]
where $u=(0,p)$ such that $k\mid p$. Multiplying $X_1$ by $\Psi(T_c^k)^{-p/k}$ on the right, we get some matrix of the form
\[
X_2:=\begin{pmatrix}
C & 0 \\ 
0 & 1
\end{pmatrix} ,
\]
where $C\in \mathrm{SL}_2(\Z)$. Since $\mathrm{SL}_2(\Z)$ is generated by $\Psi(T_a)$ and $\Psi(T_b)$, it follows that there exists $Y\in \langle \Psi(T_a), \Psi(T_b), \Psi(T_c^k),\Psi(\iota)\rangle$ such that $XY=I_{3\times 3}$. Hence, the result follows.
\end{proof}

\section{Deriving a generating set for $\ker\Psi$}
\label{sec:genset_kerpsi}
Let $\Psi:\map(S_{1,2})\to \mathrm{GL}_3(\Z)$ be the homology representation for $\map(S_{1,2})$. In this section, we show that $\ker\Psi$ is a free group on countably many generators and it is normally generated by a separating Dehn twist. Furthermore, we provide an explicit free basis for $\ker\Psi$ consisting of separating Dehn twists. We will require the following observation.

\begin{lem}
\label{lem:exact_seqn_kernels}
Let $\psi:G \to H$ and $\phi:H  \to K$ be group homomorphisms and $\psi$ be surjective. We have the short exact sequence
\[
1 \longrightarrow \ker\psi \longrightarrow \ker\phi \circ \psi \longrightarrow \ker\phi\longrightarrow 1.
\]
\end{lem}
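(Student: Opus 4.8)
The plan is to exhibit the two maps that make up the sequence and then verify exactness at each of the three terms; the whole statement is a routine diagram chase, so I will be brief. The one genuine hypothesis, surjectivity of $\psi$, enters only at the rightmost vertex, and I would flag that as the sole step carrying any content.

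First I would record the inclusion $\ker\psi \subseteq \ker(\phi\circ\psi)$: if $\psi(g)=e_H$, then $(\phi\circ\psi)(g)=\phi(e_H)=e_K$. Hence the leftmost arrow may be taken to be the inclusion $j\colon \ker\psi \hookrightarrow \ker(\phi\circ\psi)$, which is injective, giving exactness at $\ker\psi$. For the second arrow, note that if $g\in\ker(\phi\circ\psi)$ then $\phi(\psi(g))=e_K$, so $\psi(g)\in\ker\phi$; thus the restriction $\bar\psi:=\psi|_{\ker(\phi\circ\psi)}$ is a well-defined homomorphism $\ker(\phi\circ\psi)\to\ker\phi$, which will serve as the third arrow.

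Next I would check exactness at the middle term, that is, $\operatorname{im} j=\ker\bar\psi$. Since $j$ is the inclusion, $\operatorname{im} j=\ker\psi$, while $\ker\bar\psi=\{g\in\ker(\phi\circ\psi):\psi(g)=e_H\}=\ker\psi\cap\ker(\phi\circ\psi)=\ker\psi$ by the inclusion just established. In particular $\ker\psi$ is normal in $\ker(\phi\circ\psi)$, being the kernel of the homomorphism $\bar\psi$, so the sequence is a bona fide short exact sequence of groups rather than merely a chain of maps.

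The step I expect to be the only nontrivial one is exactness at the rightmost term, namely surjectivity of $\bar\psi$ onto $\ker\phi$. Given $h\in\ker\phi$, the surjectivity of $\psi$ supplies some $g\in G$ with $\psi(g)=h$; then $(\phi\circ\psi)(g)=\phi(h)=e_K$, so $g\in\ker(\phi\circ\psi)$ and $\bar\psi(g)=h$. This is precisely where surjectivity of $\psi$ is indispensable: without it, $\operatorname{im}\bar\psi$ could be a proper subgroup of $\ker\phi$ and the sequence would fail to be right-exact. Assembling the three verifications yields the asserted short exact sequence.
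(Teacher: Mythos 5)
Your proof is correct and follows essentially the same route as the paper's: both define the middle-to-right map as the restriction of $\psi$ to $\ker(\phi\circ\psi)$, verify it lands in $\ker\phi$, use surjectivity of $\psi$ to get right-exactness, and identify its kernel with $\ker\psi$. Your write-up is slightly more careful in explicitly noting normality of $\ker\psi$ in $\ker(\phi\circ\psi)$ and exactness at the left term, but the substance is identical.
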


\begin{proof}
Define $\delta: \ker \phi \circ \psi\to \ker \phi$ as $\delta: x\mapsto \psi(x)$. Since $x\in \ker \phi \circ \psi$, $\phi \psi (x)=1$, that is, $\psi(x)\in \ker \phi$. This shows that $\delta$ is well-defined. First, we show that $\delta$ is surjective. Let $y\in \ker \phi$. Since $\psi$ is surjective, there is a $x\in G$ such that $\psi(x)=y$. Since $\phi \psi(x)=1$, we have that $x\in \ker \phi \circ \psi$. Thus, $\delta$ is surjective. Now, we show that $\ker \delta=\ker \psi$. We have $x\in \ker \delta$ if and only if $\delta (x)=\psi(x)=1$ if and only if $x\in \ker \psi$. Therefore, $\ker \delta =\ker \psi$. The result follows from the observation that $\ker \psi \subset \ker \phi \circ \psi$.
\end{proof}

We need the following result~\cite{tomaszewski}, which describes a free generating set for the commutator subgroup of a free group of rank $2$. The commutator $xyx^{-1}y^{-1}$ of $x,y$ in a group will be denoted by $[x,y]$.

\begin{lem}
\label{lem:genset_comm_free}
Let $F_2$ be a free group of rank $2$ with a free basis $\{x,y\}$. Then the commutator subgroup $[F_2,F_2]$ of $F_2$ is freely generated by the set $ \{x^my^n[x,y]y^{-n}x^{-m}\mid m,n\in \Z\}$.
\end{lem}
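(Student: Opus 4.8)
The plan is to prove this via the Reidemeister--Schreier rewriting process, which simultaneously establishes freeness and produces an explicit basis that I will then match with the claimed set. Write $F_2=\langle x,y\rangle$ and $H=[F_2,F_2]$. Since $F_2/H\cong\Z^2$ via abelianization (with $x\mapsto(1,0)$ and $y\mapsto(0,1)$), the set $T=\{x^ay^b\mid a,b\in\Z\}$ is a transversal for $H$ in $F_2$, and I would first check that $T$ is a \emph{Schreier} transversal, i.e.\ closed under taking prefixes of reduced words; this is immediate, since every prefix of a reduced word $x^ay^b$ again has the form $x^{a'}y^{b'}$.

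Applying Reidemeister--Schreier to the generating set $\{x,y\}$ and the transversal $T$, the candidate generators of $H$ are $\gamma(t,s)=ts\,\overline{ts}^{\,-1}$ for $t\in T$ and $s\in\{x,y\}$, where $\overline{w}$ denotes the representative of $Hw$ in $T$. For $s=y$ one has $x^ay^b\cdot y=x^ay^{b+1}\in T$, so $\gamma(x^ay^b,y)=1$; for $s=x$ one computes $\overline{x^ay^bx}=x^{a+1}y^b$, whence $\gamma(x^ay^b,x)=x^ay^bxy^{-b}x^{-a-1}=:\gamma_{a,b}$. The Reidemeister--Schreier theorem then guarantees that $H$ is free on the nontrivial Schreier generators, which are exactly the $\gamma_{a,b}$ with $b\neq0$ (for $b=0$ the word collapses to the identity, and for $b\neq0$ it is reduced of positive length). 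Thus $H$ is free with the countable basis $\{\gamma_{a,b}\mid a\in\Z,\ b\in\Z\setminus\{0\}\}$.

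It remains to reconcile this basis with the claimed set $\{c_{m,n}:=x^my^n[x,y]y^{-n}x^{-m}\mid m,n\in\Z\}$. The key computation I would carry out is the identity
\[
c_{m,n}=\gamma_{m,n}\,\gamma_{m,n+1}^{-1},\qquad\text{with the convention }\gamma_{m,0}=1,
\]
obtained by writing $y^n[x,y]y^{-n}=(y^nxy^{-n})(y^{n+1}x^{-1}y^{-n-1})$ and conjugating by $x^m$. Since a free group on a set is the free product of the free groups on the blocks of any partition of that set, I would split $H=\ast_{m\in\Z}A_m$ with $A_m=\langle\gamma_{m,b}\mid b\neq0\rangle$, and the identity above shows $c_{m,n}\in A_m$. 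It therefore suffices to prove that, for each fixed $m$, the family $\{c_{m,n}\mid n\in\Z\}$ is a free basis of $A_m$. This is a one-variable telescoping statement: the relations $\gamma_{m,1}=c_{m,0}^{-1}$, $\gamma_{m,-1}=c_{m,-1}$, and $c_{m,n}=\gamma_{m,n}\gamma_{m,n+1}^{-1}$ let me solve recursively for every $\gamma_{m,b}$ as an explicit word in the $c_{m,n}$.

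I anticipate the main obstacle to be precisely this last step, and specifically the need to verify genuine invertibility rather than mere surjectivity: because $A_m$ has countably infinite rank, surjectivity of the map $C_{m,n}\mapsto c_{m,n}$ together with equality of ranks does \emph{not} imply injectivity, as a free group of infinite rank admits proper self-surjections. I would therefore exhibit an explicit two-sided inverse, defining each $\gamma_{m,b}$ in terms of the abstract symbols $\{C_{m,n}\}$ via the recursion above and checking that both composites act as the identity on generators. Assembling the per-block isomorphisms into a free-product isomorphism then yields that $\{c_{m,n}\mid m,n\in\Z\}$ is a free basis of $H=[F_2,F_2]$, completing the proof.
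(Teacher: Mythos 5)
Your proposal is correct, but it cannot be compared line-by-line with the paper's argument for a simple reason: the paper does not prove this lemma at all. It is quoted as a known result of Tomaszewski (the citation \cite{tomaszewski}) and used as a black box in the proof of Theorem 3.5. What you have written is therefore a self-contained proof of the cited result, and it holds up. The transversal $T=\{x^ay^b\}$ is indeed Schreier, the computation $\gamma(x^ay^b,y)=1$ and $\gamma(x^ay^b,x)=x^ay^bxy^{-b}x^{-a-1}$ is right, and Reidemeister--Schreier gives freeness on $\{\gamma_{a,b}\mid b\neq 0\}$. The identity $c_{m,n}=\gamma_{m,n}\gamma_{m,n+1}^{-1}$ (with $\gamma_{m,0}=1$) checks out directly from $y^n[x,y]y^{-n}=(y^nxy^{-n})(y^{n+1}x^{-1}y^{-n-1})$, and the resulting recursion $\gamma_{m,b}=c_{m,b-1}^{-1}\cdots c_{m,0}^{-1}$ for $b\geq 1$ and $\gamma_{m,b}=c_{m,b}\cdots c_{m,-1}$ for $b\leq -1$ telescopes cleanly in both directions, so the two-sided inverse you describe exists and the substitution $\Theta(c_{m,n})=C_{m,n}$ recovers the generators exactly. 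Your explicit attention to the infinite-rank pitfall (surjectivity plus equal rank does not give a basis) is exactly the point that a careless version of this argument would miss, and the blockwise free-product decomposition $H=\ast_m A_m$ is the right device to reduce to a one-variable telescoping change of basis. The only editorial remark is that, were this proof to be inserted into the paper, it would replace an external citation rather than an existing argument; it also has the side benefit of re-deriving, along the way, the standard Schreier basis $\{x^ay^bxy^{-b}x^{-a-1}\mid b\neq 0\}$, which makes the normal generation claim in Theorem 3.5 (every basis element is conjugate to $[x,y]$) transparent.
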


\noindent The following lemma describes some relations~\cite[Fact 3.9 and Proposition 3.11-3.12]{primer} between Dehn twists about simple closed curves.

\begin{lem}
\label{lem:relation_dehn_twist}
Let $c_1$ and $c_2$ be two simple closed curves on a surface $S$.
\begin{enumerate}[(i)]
\item If $i(c_1,c_2)=0$, then $[T_{c_1},T_{c_2}]=1$.
\item If $i(c_1,c_2)=1$, then the following relations (called braid relations) hold in $\map(S)$.
\begin{enumerate}[(a)]
\item $T_{c_1}T_{c_2}T_{c_1}=T_{c_2}T_{c_1}T_{c_2}$.
\item $T_{c_1}T_{c_2}T_{c_1}^{-1}=T_{c_2}^{-1}T_{c_1}T_{c_2}$.
\item $T_{c_1}T_{c_2}(c_1)=c_2$.
\end{enumerate} 
\end{enumerate} 
\end{lem}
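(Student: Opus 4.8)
The plan is to build everything on the single \emph{conjugation formula}: for any orientation-preserving homeomorphism $\phi$ of a surface $S$ and any simple closed curve $c$, one has $T_{\phi(c)}=\phi T_c\phi^{-1}$ at the level of mapping classes. This is the one genuinely geometric fact I would import, and it follows from the observation that $\phi$ carries a twisting annulus for $c$ to a twisting annulus for $\phi(c)$. Both the commutation relation in (i) and the braid relations in (ii) will be extracted from it, together with a single explicit curve computation, namely (ii)(c). I would therefore organize the proof so that (ii)(c) is established first, and (ii)(a) and (ii)(b) are then deduced purely algebraically.

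For part (i) I would use \emph{disjointness of supports}. Since $i(c_1,c_2)=0$, the change-of-coordinates principle lets me isotope $c_1$ and $c_2$ to be disjoint, and then choose disjoint annular neighborhoods $A_1\supset c_1$ and $A_2\supset c_2$. The twist $T_{c_i}$ is represented by a homeomorphism supported in $A_i$, so $T_{c_1}$ and $T_{c_2}$ are represented by homeomorphisms with disjoint support; such homeomorphisms commute on the nose, giving $[T_{c_1},T_{c_2}]=1$ in $\map(S)$.

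The heart of the argument, and the step I expect to be the main obstacle, is (ii)(c): when $i(c_1,c_2)=1$, one must check $T_{c_1}T_{c_2}(c_1)=c_2$. I would first invoke the change-of-coordinates principle to reduce to a single model configuration, a one-holed torus neighborhood of $c_1\cup c_2$ in which $c_1$ and $c_2$ are the two standard generating curves meeting once. Any pair of curves with geometric intersection one is carried to this model by a homeomorphism of $S$, so by the conjugation formula it suffices to verify the identity there. In the model I would track the image of $c_1$ first under $T_{c_2}$ and then under $T_{c_1}$ by an explicit isotopy (a picture-level computation), being careful about the left-handed convention for the twists and the chosen orientations; this bookkeeping of twisting directions is where errors are most likely to creep in, so I would fix conventions once at the outset and check a representative arc by hand.

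Granting (ii)(c), I would finish algebraically with no further geometric input. Combining it with the conjugation formula gives $T_{c_2}=T_{(T_{c_1}T_{c_2})(c_1)}=(T_{c_1}T_{c_2})\,T_{c_1}\,(T_{c_1}T_{c_2})^{-1}$; right-multiplying this identity first by $T_{c_1}$ and then by $T_{c_2}$ rearranges it into $T_{c_1}T_{c_2}T_{c_1}=T_{c_2}T_{c_1}T_{c_2}$, which is (ii)(a). Finally, starting from (ii)(a) and left-multiplying by $T_{c_2}^{-1}$ and right-multiplying by $T_{c_1}^{-1}$ turns $T_{c_1}T_{c_2}T_{c_1}=T_{c_2}T_{c_1}T_{c_2}$ into $T_{c_1}T_{c_2}T_{c_1}^{-1}=T_{c_2}^{-1}T_{c_1}T_{c_2}$, which is (ii)(b).
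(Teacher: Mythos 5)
Your argument is correct, and it is essentially the standard one: the paper itself gives no proof of this lemma, citing it as Fact~3.9 and Propositions~3.11--3.12 of Farb--Margalit's \emph{Primer}, where exactly your route is taken (disjoint annular supports for (i); the conjugation formula $T_{\phi(c)}=\phi T_c\phi^{-1}$ together with the model computation $T_{c_1}T_{c_2}(c_1)=c_2$ to derive the braid relation, with (b) following from (a) by rearrangement). The only step you leave at picture level is the verification of (ii)(c) in the one-holed torus model, which is the same amount of geometric input the cited source uses, so there is no gap.
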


Let $\pmap(S_{1,2})$ be the subgroup of $\map(S_{1,2})$ whose elements fix each marked points on $S_{1,2}$ point-wise. For $\psi:=\Psi|_{\pmap(S_{1,2})}$, observe that $\ker \Psi=\ker \psi$ and
\[
\mathrm{Im}(\psi)=
\left\{
\begin{pmatrix}
A & 0 \\ 
v & 1
\end{pmatrix}
\Big|~
A\in \mathrm{SL}_2(\Z), v=(m,n)\in \Z\times \Z
\right\}.
\]
Define the surjective homomorphism $\phi:\mathrm{Im}(\psi)\to \mathrm{SL}_2(\Z)$ as
\[
\phi:
\begin{pmatrix}
A & 0 \\ 
v & 1
\end{pmatrix}
\mapsto A \in \mathrm{SL}_2(\Z).
\]
It can be seen that
\[
\ker(\phi)=
\left\{
\begin{pmatrix}
1 & 0 & 0 \\ 
0 & 1 & 0 \\
m & n & 1 \\
\end{pmatrix}
\Big|~
m,n\in \Z
\right\}=\left\langle
\begin{pmatrix}
1 & 0 & 0 \\ 
0 & 1 & 0 \\ 
1 & 0 & 1
\end{pmatrix},
\begin{pmatrix}
1 & 0 & 0 \\ 
0 & 1 & 0 \\ 
0 & 1 & 1
\end{pmatrix} 
\right\rangle,
\]
that is, $\ker\phi\cong \Z \times \Z$. We apply Lemma~\ref{lem:exact_seqn_kernels} to maps $\psi$ and $\phi$ defined above to derive a generating set for $\ker\Psi$.

\begin{theorem}
\label{thm:genset_ker}
The kernel of $\Psi$ is isomorphic to the commutator subgroup $[F_2,F_2]$ of a free group $F_2$ of rank $2$ and it is normally generated by a separating Dehn twist. Furthermore, for curves as in Figure~\ref{fig:s3_s12_cover}, we have
\[
\ker\Psi= \langle T_b^{-n}T_cT_a^{-1}T_b^nT_a^m(T_bT_c)^6T_a^{-m}T_b^{-n}T_aT_c^{-1}T_b^n \mid m,n \in \Z \rangle.
\]
\end{theorem}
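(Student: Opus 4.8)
The plan is to build $\ker\Psi$ from the two pieces supplied by the short exact sequence of Lemma~\ref{lem:exact_seqn_kernels}, applied to $\psi=\Psi|_{\pmap(S_{1,2})}$ and the projection $\phi:\mathrm{Im}(\psi)\to \mathrm{SL}_2(\Z)$. The key structural input is that $\mathrm{Im}(\psi)$ is a semidirect product $\Z^2\rtimes \mathrm{SL}_2(\Z)$, where $\mathrm{SL}_2(\Z)$ acts on the bottom row vector $v$ by right multiplication. Since $\mathrm{SL}_2(\Z)$ is itself a free product $\Z_4 *_{\Z_2} \Z_6$, the group $\mathrm{Im}(\psi)$ is in fact isomorphic to $F_2\rtimes(\text{torsion})$ in a way that identifies $\ker\phi\circ\psi=\ker\psi=\ker\Psi$ with a specific free group. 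First I would make this identification precise: the preimage $\psi^{-1}(\ker\phi)$ is the subgroup of $\pmap(S_{1,2})$ mapping into the abelian translation part $\ker\phi\cong\Z^2$, and $\ker\Psi$ is its kernel down to $\Z^2$. I expect the cleanest route to the isomorphism $\ker\Psi\cong[F_2,F_2]$ is to recognize $\psi^{-1}(\ker\phi)$ as a free group $F_2$ (the kernel of $\pmap(S_{1,2})\to\mathrm{SL}_2(\Z)$, i.e. the point-pushing / Torelli-type subgroup is free of rank $2$ here because $S_{1,2}$ has a free fundamental-group-based such subgroup), with $\psi$ restricting to the abelianization $F_2\to\Z^2=[F_2,F_2]\backslash F_2$; then $\ker\Psi=\ker\psi=[F_2,F_2]$ exactly.

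Once the abstract isomorphism $\ker\Psi\cong[F_2,F_2]$ is in place, the normal-generation claim is immediate: $[F_2,F_2]$ is normally generated in $F_2$ by the single commutator $[x,y]$, so I need only exhibit a preimage of $[x,y]$ under the isomorphism as an explicit mapping class and verify it is a separating Dehn twist. The natural candidate, given the relation $T_c T_a^{-1}$ and the chain-relation flavor of the other generators, is the mapping class $(T_bT_c)^6$, or a conjugate thereof. I would identify $x,y\in F_2$ with mapping classes whose images under $\psi$ are the two generators of $\ker\phi$ displayed in the excerpt (the matrices with bottom rows $(1,0)$ and $(0,1)$); reading off the lemma, $T_b^{-1}T_c$ has image the $(0,1)$-translation-type matrix and a suitable word gives the $(1,0)$-type, so that $x,y$ can be taken as explicit products of $T_a,T_b,T_c$. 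The commutator $[x,y]$ then corresponds to a Dehn twist about a separating curve, and the chain relation (the lantern- or star-relation packaged as $(T_bT_c)^6$ being a boundary/separating twist on the genus-one subsurface) is what certifies that it is \emph{separating} and of the stated form.

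For the explicit free basis, the plan is to transport Tomaszewski's basis of Lemma~\ref{lem:genset_comm_free} through the isomorphism. That lemma gives $[F_2,F_2]$ freely generated by $\{x^m y^n[x,y]y^{-n}x^{-m}\mid m,n\in\Z\}$. I would fix the identifications $x\mapsto$ (a word in the twists) and $y\mapsto$ (another word) so that the single commutator $[x,y]$ maps to the separating twist produced above, and then simply conjugate by $x^m y^n$. Choosing $x$ and $y$ to be $T_a$- and $T_b$-type words (matching the $\Psi(T_a),\Psi(T_b)$ generators of $\mathrm{SL}_2(\Z)$, but lifted to the right coset representatives inside $\psi^{-1}(\ker\phi)$), the conjugating prefixes $x^m y^n$ become $T_a^m$ and $T_b^{\pm n}$, and carrying the conjugation symbolically through the word $T_c T_a^{-1}\cdots (T_bT_c)^6 \cdots T_a T_c^{-1}$ reproduces exactly the displayed generators $T_b^{-n}T_cT_a^{-1}T_b^nT_a^m(T_bT_c)^6T_a^{-m}T_b^{-n}T_aT_c^{-1}T_b^n$. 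The main obstacle I anticipate is precisely the bookkeeping in this last step: correctly choosing the lifts $x,y$ (rather than their abelianized images) so that $[x,y]$ lands on the intended separating twist and so that the conjugation by $x^my^n$ collapses to the clean $T_a^m, T_b^{\pm n}$ prefixes. This requires using the braid relations of Lemma~\ref{lem:relation_dehn_twist} to commute and cancel twist factors, and verifying that the inner word $T_cT_a^{-1}T_b^n(\cdots)T_b^{-n}T_aT_c^{-1}$ is genuinely the correct conjugate of $(T_bT_c)^6$ — the freeness of the resulting set then follows formally from Lemma~\ref{lem:genset_comm_free} together with the isomorphism, so no independent freeness argument is needed.
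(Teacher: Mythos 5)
Your proposal follows essentially the same route as the paper's proof: Lemma~\ref{lem:exact_seqn_kernels} applied to $\psi$ and $\phi$ identifies $\ker\Psi$ with the kernel of the abelianization of the free point-pushing subgroup $\ker(forget)\cong\pi_1(S_{1,1})\cong F_2$ from the Birman exact sequence, the chain relation exhibits $push([\bar a,\bar b])=(T_bT_c)^6$ as the separating twist that normally generates, and Tomaszewski's basis is transported through the identification. The only points to tighten are that one must actually verify $\phi\circ\psi$ agrees with the forgetful map on the generators $T_a,T_b,T_c$, and that $x,y$ must be taken to be the point-pushes of a free basis of $\pi_1(S_{1,1})$ (namely $T_c^{-1}T_a$ and $T_b^{-1}T_\epsilon$ with $\epsilon=T_cT_b(a)$), not merely arbitrary preimages of a basis of $\ker\phi\cong\Z^2$ — your $T_b^{-1}T_c$ is a slip for $T_a^{-1}T_c$, and the prefix only simplifies to the displayed form after braid-relation manipulation rather than collapsing to $T_a^mT_b^{\pm n}$.
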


\begin{proof}
We recall the Birman short exact sequence~\cite[Theorem 4.6]{primer} induced by forgetting one of the marked points
\[
1 \longrightarrow \pi_1(S_{1,1})\xlongrightarrow{push}\pmap(S_{1,2})\xlongrightarrow{forget}\map(S_{1,1})\longrightarrow 1.
\]
Since $\map(S_{1,1})\cong \mathrm{SL}_2(\Z)$~\cite[Section 2.2.4]{primer}, we identify $\map(S_{1,1})$ with $\mathrm{SL}_2(\Z)$. We have $\phi\circ\psi:\pmap(S_{1,2})\to \mathrm{SL}_2(\Z)$. We claim that $\phi\circ \psi =forget$. It is known~\cite[Section 4.4.4]{primer} that $\pmap(S_{1,2})=\langle T_a,T_b,T_c \rangle$. The claim follows from the observation that
\[
\phi\circ \psi (T_a)=\phi\circ \psi (T_c)=
\begin{pmatrix}
1 & 1 \\ 
0 & 1
\end{pmatrix}=forget(T_a)=forget (T_c),\quad
\phi\circ \psi (T_b)=
\begin{pmatrix}
1 & 0 \\ 
-1 & 1
\end{pmatrix}=forget(T_b).
\]
By Lemma~\ref{lem:exact_seqn_kernels}, we have the short exact sequence
\[
1\to \ker\Psi \to \ker forget=\mathrm{Im}(push) \to \ker \phi \to 1.
\]
Since $\mathrm{Im}(push)\cong \pi_1(S_{1,1})\cong F_2$ and $\ker \phi \cong \Z \times \Z$, it follows that $\ker \Psi$ is the commutator subgroup $[F_2,F_2]$ of $F_2$, where $F_2$ is a free group of rank $2$. Let $\pi_1(S_{1,1})=\langle \bar{a},\bar{b}\rangle$ (see Figure \ref{fig:genset_pi_1_s12}). By Lemma~\ref{lem:genset_comm_free}, we have that $\ker \Psi$ is normally generated by $push([\bar{a},\bar{b}])$.
\begin{figure}[ht]
\centering
\includegraphics[scale=.7]{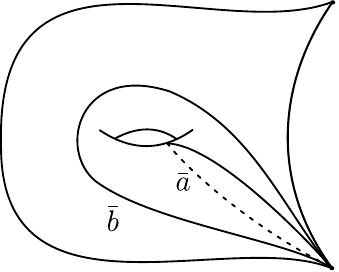}
\caption{A set of generating loops for $\pi_1(S_{1,1})$.}
\label{fig:genset_pi_1_s12}
\end{figure}

Now, we compute $push(\bar{a})$, $push(\bar{b})$, and $push([\bar{a},\bar{b}])$. For any $\gamma\in \pi_1(S_{1,1})$, we know that~\cite[Fact 4.7]{primer} $push(\gamma)=T_{\gamma_1}^{-1}T_{\gamma_2}$, where $\gamma_1$ and $\gamma_2$ are boundary curves of an annular neighborhood of $\gamma$. Therefore, $push(\bar{a})=T_c^{-1}T_a$, $push(\bar{b})=T_b^{-1}T_{\epsilon}$, and $push([\bar{a},\bar{b}])=T_d^{-1}T_{\eta}$ (see Figure~\ref{fig:push_images}).
\begin{figure}[ht]
\centering
\includegraphics[scale=0.75]{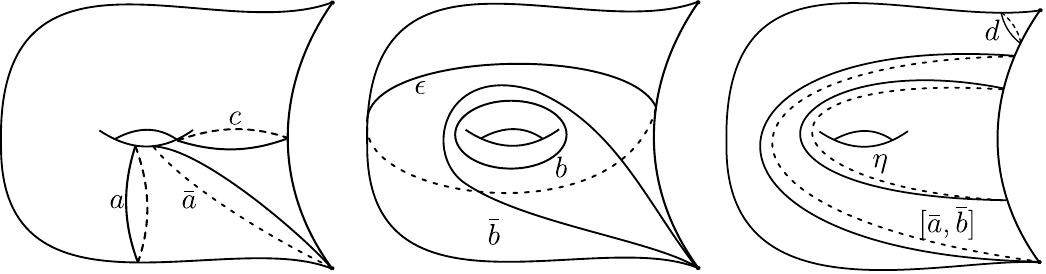}
\caption{Computation of $push(\bar{a})$, $push(\bar{b})$, and $push([\bar{a},\bar{b}])$.}
\label{fig:push_images}
\end{figure}
Since $\epsilon=T_cT_b(a)$, we have $T_{\epsilon}=T_cT_bT_aT_b^{-1}T_c^{-1}$. By the chain relation~\cite[Proposition 4.12]{primer} in $\map(S_{1,2})$, it follows that $T_{\eta}=(T_bT_c)^6$. Since $d$ is null-homotopic, we have $T_d=1$. Thus, $\ker \Psi$ is normally generated by a separating Dehn twist. By Lemma~\ref{lem:relation_dehn_twist}, we have $[T_c,T_{\eta}]=1=[T_c,T_a]$. It follows by Lemma~\ref{lem:genset_comm_free} that
\[
\ker\Psi= \langle T_b^{-n}T_cT_bT_a^nT_b^{-1}T_a^m(T_bT_c)^6T_a^{-m}T_bT_a^{-n}T_b^{-1}T_c^{-1}T_b^n \mid m,n \in \Z \rangle.
\]
By braid relations (Lemma~\ref{lem:relation_dehn_twist}), we have
\begin{align*}
&T_b^{-n}T_c(T_bT_a^nT_b^{-1})T_a^m(T_bT_c)^6T_a^{-m}(T_bT_a^{-n}T_b^{-1})T_c^{-1}T_b^n\\
&=T_b^{-n}T_cT_a^{-1}T_b^nT_a^{m+1}(T_bT_c)^6T_a^{-m-1}T_b^{-n}T_aT_c^{-1}T_b^n.
\end{align*}
Hence,
\[
\ker\Psi= \langle T_b^{-n}T_cT_a^{-1}T_b^nT_a^m(T_bT_c)^6T_a^{-m}T_b^{-n}T_aT_c^{-1}T_b^n \mid m,n \in \Z \rangle.
\]
\end{proof}

\section{A finite generating set for $\lmap_{p_k}(S_{1,2})$}
\label{sec:genset_lmod}
In this section, we provide a finite generating set for $\lmap_{p_k}(S_{1,2})$ by combining generating sets of $\mathrm{Im}(\Psi)$ from Proposition~\ref{prop:genset_image} and $\ker \Psi$ from Theorem~\ref{thm:genset_ker}.

\begin{theorem}
\label{thm:genset_lmod}
For $k\geq 2$, consider the $k$-sheeted cyclic cover $p_k:S_k \to S_{1,2}$. For curves as in Figure~\ref{fig:s3_s12_cover}, we have
\[
\lmap_{p_k}(S_{1,2})=\langle T_a,T_b,T_c^k, \iota,(T_bT_c)^6, T_c^jT_b^iT_a(T_bT_c)^6T_a^{-1}T_b^{-i}T_c^{-j}\mid 1\leq i,j <k \rangle. 
\]
\end{theorem}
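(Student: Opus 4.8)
The plan is to assemble the generating set for $\lmap_{p_k}(S_{1,2})$ from the two pieces already computed: the finite generating set for the image $\Psi(\lmap_{p_k}(S_{1,2}))$ from Proposition~\ref{prop:genset_image}, and the (countable) free generating set for $\ker\Psi$ from Theorem~\ref{thm:genset_ker}. The underlying principle is elementary group theory: if $H=\lmap_{p_k}(S_{1,2})$, then $H$ fits into the short exact sequence
\[
1\longrightarrow \ker\Psi \longrightarrow H \xrightarrow{\;\Psi\;} \Psi(H)\longrightarrow 1,
\]
and $H$ is generated by any lift of a generating set of $\Psi(H)$ together with a \emph{normal} generating set of $\ker\Psi$ inside $H$. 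The elements $T_a,T_b,T_c^k,\iota$ visibly lie in $\lmap_{p_k}(S_{1,2})$ (by Lemma~\ref{lem:image_psi}) and map onto the generators of $\Psi(H)$ listed in Proposition~\ref{prop:genset_image}, so these four supply lifts of the image generators.

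The crux of the argument is to show that $\ker\Psi$, as a \emph{normal subgroup of $H$}, is generated by finitely many elements together with their $H$-conjugates that are already expressible in the stated generators. By Theorem~\ref{thm:genset_ker}, $\ker\Psi$ is normally generated in $\map(S_{1,2})$ by the single separating twist $(T_bT_c)^6 = T_\eta$. However, I only have the conjugating elements available inside $H$, not all of $\map(S_{1,2})$. So the key step is to reduce the infinite conjugation orbit to a finite one modulo $H$. Concretely, every normal generator $T_b^{-n}T_cT_a^{-1}T_b^nT_a^m(T_bT_c)^6T_a^{-m}T_b^{-n}T_aT_c^{-1}T_b^n$ from Theorem~\ref{thm:genset_ker} is a conjugate $w\,(T_bT_c)^6\,w^{-1}$ of the central element $(T_bT_c)^6$. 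Since $T_a,T_b\in H$, the conjugating subword built from $T_a^{\pm}, T_b^{\pm}$ already lies in $H$; the only letters that may fail to lie in $H$ are powers of $T_c$ with exponent not divisible by $k$. The plan is therefore to write the conjugating word using $T_c^j$ with $0\le j<k$ as the ``non-liftable remainder'' and absorb the $T_c^k$-part into $H$, exhibiting each normal generator of $\ker\Psi$ as an $H$-conjugate of one of the finitely many elements $T_c^{j}T_b^{i}T_a(T_bT_c)^6T_a^{-1}T_b^{-i}T_c^{-j}$ with $1\le i,j<k$ (plus $(T_bT_c)^6$ itself, the $j=i=0$ case).

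The main obstacle I expect is the bookkeeping that makes this reduction precise: I must verify that conjugating $(T_bT_c)^6$ by the full infinite family $\{T_b^{-n}T_cT_a^{-1}T_b^nT_a^m\}$ produces, modulo conjugation by the subgroup $\langle T_a,T_b,T_c^k\rangle\cap H$, only the finitely many residues indexed by $i,j\bmod k$. This uses that $(T_bT_c)^6$ commutes with $T_c$ and with the chain-relation data (Lemma~\ref{lem:relation_dehn_twist}(i), as invoked in Theorem~\ref{thm:genset_ker}), so that $T_c^{mk}$ factors can be slid out of the conjugating word and recombined as genuine elements of $H$. I would handle this by first reducing each exponent $m,n$ modulo $k$ via the identities $T_c^{mk},T_b^{nk}\in H$ and the commutation $[T_c,(T_bT_c)^6]=1$, then checking that the resulting finite list of conjugacy representatives matches the stated generators $T_c^{j}T_b^{i}T_a(T_bT_c)^6T_a^{-1}T_b^{-i}T_c^{-j}$. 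Once this normal-generation-modulo-$H$ statement is in hand, combining it with the four image lifts $T_a,T_b,T_c^k,\iota$ and the seed twist $(T_bT_c)^6$ yields the claimed finite generating set, completing the proof.
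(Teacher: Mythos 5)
Your overall strategy is the same as the paper's: generate $\lmap_{p_k}(S_{1,2})$ by combining the image generators of Proposition~\ref{prop:genset_image} with the explicit free basis of $\ker\Psi$ from Theorem~\ref{thm:genset_ker}, and then use braid relations and disjointness commutations to collapse the infinite family of kernel generators to finitely many representatives indexed by residues mod $k$. Two of your framing statements are wrong, however. First, the ``underlying principle'' that $H$ is generated by lifts of generators of $\Psi(H)$ together with a set that merely \emph{normally generates} $\ker\Psi$ in $H$ is false in general (take $H=A_5\times\Z$, $K=A_5$, $N$ a single $5$-cycle: $N$ normally generates $K$ in $H$, but $\langle N, (1,1)\rangle\neq H$). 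What is actually needed, and what the paper does, is to express every element of an honest generating set of $\ker\Psi$ as a word in the proposed generators, with all conjugators lying in the subgroup those generators produce --- not merely in $H$. Second, $(T_bT_c)^6=T_\eta$ is \emph{not} central in $\pmap(S_{1,2})$ (it commutes with $T_b$ and $T_c$ but not with $T_a$; if it were central, $\ker\Psi$ would be cyclic rather than free of infinite rank). These may be slips, but the second one is exactly the point on which the difficulty of the theorem rests.

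The genuine gap is in your reduction of the exponent $m$. After the braid-relation manipulations (which you correctly anticipate for the exponent $n$: the lone $T_c$ conjugator turns $T_b^n$ into $T_c^n$, which can then be cut down to $T_c^j$, $0\le j<k$, using $T_c^k$), each kernel generator becomes, up to conjugation by elements of $\langle T_a,T_b,T_c^k\rangle$, the element $T_c^jT_b^mT_a(T_bT_c)^6T_a^{-1}T_b^{-m}T_c^{-j}$ with $m\in\Z$ arbitrary. Reducing $m=kq+i$ modulo $k$ requires conjugating by $T_c^jT_b^{kq}T_c^{-j}$, and your proposed mechanism --- ``$T_b^{nk}\in H$'' and ``sliding $T_c^{mk}$ factors out'' --- does not apply here: $T_b^{kq}$ cannot be slid past $T_c^j$ because $i(b,c)=1$, so the needed conjugator is $T_c^jT_b^{kq}T_c^{-j}$, which is not visibly a word in $T_a,T_b,T_c^k$. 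The paper closes this by computing
\[
\Psi(T_c^jT_b^kT_c^{-j})=
\begin{pmatrix}
1-kj & kj^2 & 0 \\
-k & 1+kj & 0 \\
-kj & kj^2 & 1
\end{pmatrix},
\]
checking via Lemma~\ref{lem:image_psi} that this lies in $\Psi(\lmap_{p_k}(S_{1,2}))$, and then arguing (via Lemma~\ref{lem:gcd_sl2z}) that $T_c^jT_b^kT_c^{-j}$ is a word in $T_a$, $T_b$, $T_c^k$ and kernel elements, so it can be absorbed. Without this step --- or some substitute for it --- the passage from infinitely many values of $m$ to the finite list $1\le i<k$ does not go through, so as written the proposal does not yet constitute a proof.
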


\begin{proof}
By Proposition~\ref{prop:genset_image} and Theorem~\ref{thm:genset_ker}, it follows that $\lmap_{p_k}(S_{1,2})=\langle \{T_a,T_b,T_c^k,\iota\} \cup \s  \rangle$, where $\s=\{ T_b^{-n}T_cT_a^{-1}T_b^nT_a^m(T_bT_c)^6T_a^{-m}T_b^{-n}T_aT_c^{-1}T_b^n \mid m,n \in \Z\}$. Since $T_b, T_a$ lift under $p_k$ and $[T_c,T_a]=1$, the set $\s$ can be replaced by $\s_1=\{T_cT_b^nT_a^m(T_bT_c)^6T_a^{-m}T_b^{-n}T_c^{-1} \mid m,n \in \Z\}$. Since $[T_c,T_a]=1$, using braid relations (Lemma~\ref{lem:relation_dehn_twist}), we have 
\[
(T_cT_b^nT_c^{-1})T_a^m(T_bT_c)^6T_a^{-m}(T_cT_b^{-n}T_c^{-1})=T_b^{-1}T_c^nT_bT_a^{m}(T_bT_c)^6T_a^{-m}T_bT_c^{-n}T_b.
\]
Since $T_b$ and $T_c^k$ lift under $p_k$, the set $\s_1$ can be replaced by $\s_2=\{T_c^jT_bT_a^m(T_bT_c)^6T_a^{-m}T_b^{-1}T_c^{-j}\mid m\in \Z,~0\leq j<k\}$. For any $0\leq j<k$, by Lemma~\ref{lem:image_twists}, we can compute that
\[
\Psi(T_c^jT_b^kT_c^{-j})=
\begin{pmatrix}
1-kj & kj^2 & 0 \\ 
-k & 1+kj & 0 \\ 
-kj & kj^2 & 1
\end{pmatrix}.
\]
Thus, by Lemma~\ref{lem:image_psi}, $T_c^jT_b^kT_c^{-j}$ lifts under $p_k$. Let $m=kq+i$ for some $q,i\in \Z$ such that $0\leq i<k$. We have
\begin{align*}
&T_c^jT_bT_a^m(T_bT_c)^6T_a^{-m}T_b^{-1}T_c^{-j}\\
&=T_c^j(T_bT_a^mT_b^{-1})(T_bT_c)^6(T_bT_a^{-m}T_b^{-1})T_c^{-j}\\
&=T_a^{-1}T_c^jT_b^mT_a(T_bT_c)^6T_a^{-1}T_b^{-m}T_c^{-j}T_a\\
&=T_a^{-1}(T_c^jT_b^kT_c^{-j})^qT_c^{j}T_b^iT_a(T_bT_c)^6T_a^{-1}T_b^{-i}T_c^{-j}(T_c^jT_b^kT_c^{-j})^{-q}T_a.
\end{align*}
Thus, the set $\s_2$ can be replaced by $\s_3=\{ T_c^jT_b^kT_c^{-j}, T_c^{j}T_b^iT_a(T_bT_c)^6T_a^{-1}T_b^{-i}T_c^{-j} \mid 0\leq i,j<k \}$. For $\ell=\gcd(-kj,kj^2)$, by Lemma~\ref{lem:gcd_sl2z}, there is a matrix in $\mathrm{SL}_2(\Z)$ such that if we multiply it by $\Psi(T_c^jT_b^kT_c^{-j})$, we get a matrix of the form
\[
X=\begin{pmatrix}
A & 0 \\ 
v & 1
\end{pmatrix},
\]
where $A\in \mathrm{SL}_2(\Z)$ and $v=(0,\ell)$. Since $k\mid \ell$, we have $X\in \langle \Psi(T_a),\Psi(T_b),\Psi(T_c^k)\rangle$. Since $\ker\Psi\subset \lmap_{p_k}(S_{1,2})$, we can write $T_c^jT_b^kT_c^{-j}$ as a word in $T_a$, $T_b$, $T_c^k$, and elements form $\ker\Psi$. Now, it follows that $\s_3$ can be replaced with the set
\[
\s_4=\{(T_bT_c)^6, T_c^jT_b^iT_a(T_bT_c)^6T_a^{-1}T_b^{-i}T_c^{-j}\mid 1\leq i,j <k\}.
\]
Hence, the result follows. 
\end{proof}

For $k=2,3$, in the following result, we recover the generating sets obtained by Ghaswala~\cite[Section 5.1.3]{ghaswala_thesis}.

\begin{prop}
\label{prop:genset_lmod_23}
For $k=2,3$, we have $\lmap_{p_k}(S_{1,2})=\langle T_a,T_b,T_c^k,\iota\rangle$. 
\end{prop}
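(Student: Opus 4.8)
The plan is to take the finite generating set of Theorem~\ref{thm:genset_lmod} as the starting point and show that, for $k=2,3$, the two families of extra generators are redundant. Writing $G_k:=\langle T_a,T_b,T_c^k,\iota\rangle$ and $C_{i,j}:=T_c^jT_b^iT_a(T_bT_c)^6T_a^{-1}T_b^{-i}T_c^{-j}$, Theorem~\ref{thm:genset_lmod} gives $\lmap_{p_k}(S_{1,2})=\langle T_a,T_b,T_c^k,\iota,(T_bT_c)^6,C_{i,j}\mid 1\le i,j<k\rangle$, so it suffices to prove $(T_bT_c)^6\in G_k$ and $C_{i,j}\in G_k$ for all $1\le i,j<k$. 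I would first record a clean reformulation: by Proposition~\ref{prop:genset_image} we already have $\Psi(G_k)=\Psi(\lmap_{p_k}(S_{1,2}))$, and since $\ker\Psi\subseteq\lmap_{p_k}(S_{1,2})$ (Theorem~\ref{thm:genset_ker}), proving $G_k=\lmap_{p_k}(S_{1,2})$ is equivalent to proving $\ker\Psi\subseteq G_k$. The reduction in Theorem~\ref{thm:genset_lmod} is precisely the assertion that this containment is witnessed by the finitely many elements $(T_bT_c)^6$ and $C_{i,j}$.

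Before computing, I would assemble the relations available inside $G_k$. From Lemma~\ref{lem:relation_dehn_twist} come the braid relations $T_aT_bT_a=T_bT_aT_b$, $T_bT_cT_b=T_cT_bT_c$ and the commutation $[T_a,T_c]=1$; from the proof of Theorem~\ref{thm:genset_ker} comes the fact that $(T_bT_c)^6=T_\eta$ is a separating twist centralized by $T_b$ and $T_c$, i.e. $[T_b,(T_bT_c)^6]=[T_c,(T_bT_c)^6]=1$; and from $\Psi(\iota)=-I$ (Lemma~\ref{lem:image_twists}) together with $\iota^2=1$ comes the conjugation rule $\iota T_x\iota^{-1}=T_{\iota(x)}$, with $\iota$ fixing $a$ and $b$. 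Crucially, $T_c^k\in G_k$ while $T_c^{j}\notin G_k$ for $0<j<k$, so the entire difficulty is to absorb the low powers of $T_c$ that appear in the extra generators.

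The first sub-step is to exhibit an explicit word for $(T_bT_c)^6$ in the generators of $G_k$. The idea is to use $(T_bT_c)^6=(T_cT_bT_c)^4=(T_bT_cT_b)^4$ and to slide twists past one another with the braid relation so as to collect occurrences of $T_c$ into blocks $T_c^{k}\in G_k$, clearing any residual single $T_c$-factors by conjugating with $\iota$ and using $[T_a,T_c]=1$; this should realize $(T_bT_c)^6$ as a product of $T_a,T_b,T_c^{\pm k}$ and $\iota$. The second, and main, sub-step is to show $C_{i,j}\in G_k$ for the finitely many pairs with $1\le i,j<k$ (only $C_{1,1}$ when $k=2$, and $C_{1,1},C_{1,2},C_{2,1},C_{2,2}$ when $k=3$). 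Here I would rewrite the conjugating prefix $T_c^jT_b^i$ using $T_bT_cT_b=T_cT_bT_c$, move the surviving $T_c$-powers through $(T_bT_c)^6$ via $[T_c,(T_bT_c)^6]=1$, and convert the remaining low powers of $T_c$ into elements of $G_k$ using $T_c^k\in G_k$ and the $\iota$-conjugation, so that each $C_{i,j}$ becomes a word manifestly in $T_a,T_b,T_c^{\pm k},\iota$.

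The hard part will be this last sub-step. Because $(T_bT_c)^6=T_\eta$ is \emph{not} central in $\pmap(S_{1,2})$ (one checks $[T_a,T_\eta]\ne1$, so the conjugates $C_{i,j}$ are genuinely distinct mapping classes and do not collapse to $(T_bT_c)^6$), the reduction cannot be purely formal and the exponent bookkeeping must be carried out by hand. The restriction to $k=2,3$ is essential rather than cosmetic: only for $j<k\le 3$ can the low power $T_c^{j}$ be re-expressed, after the braid and involution manipulations, in terms of $T_c^{\pm k}$ and the remaining generators of $G_k$, whereas for $k\ge 4$ the analogous conjugates escape $G_k$ (so that the larger generating set of Theorem~\ref{thm:genset_lmod} is then genuinely needed). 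Thus the crux is to perform these finitely many word reductions so that every intermediate expression is visibly an element of $G_k$, thereby recovering Ghaswala's generating set $\langle T_a,T_b,T_c^k,\iota\rangle$.
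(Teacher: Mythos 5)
Your overall strategy coincides with the paper's: start from the generating set of Theorem~\ref{thm:genset_lmod} and show that $(T_bT_c)^6$ and the conjugates $C_{i,j}$ already lie in $G_k=\langle T_a,T_b,T_c^k,\iota\rangle$. Your first sub-step is sound and matches the paper, which rewrites $(T_bT_c)^6=(T_c^2T_b)^4$ for $k=2$ and $(T_bT_c)^6=(T_c^3T_b)^3$ for $k=3$ using only braid relations. But the proposal stops at the point where the proof actually begins: the finitely many word reductions for the $C_{i,j}$, which you yourself identify as ``the crux,'' are never carried out, and your plan for them contains a tool that cannot work. You propose to ``clear residual single $T_c$-factors by conjugating with $\iota$,'' but $\iota$ fixes the isotopy classes of $a$, $b$, and $c$ (indeed $\iota$ is central in $\map(S_{1,2})$), so $\iota T_c\iota^{-1}=T_c$ and conjugation by $\iota$ can never convert a stray power of $T_c$ into an element of $G_k$. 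With that mechanism removed, your plan reduces to ``shuffle braid relations until it works,'' which is not yet a proof that it does.

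The missing idea in the paper's argument is the change-of-coordinates identity: since $T_cT_bT_a(b)=a$ and $T_cT_bT_a(c)=b$ (Lemma~\ref{lem:relation_dehn_twist}(ii)(c)), one gets
\[
C_{1,1}=T_cT_bT_a(T_bT_c)^6T_a^{-1}T_b^{-1}T_c^{-1}=(T_aT_b)^6\in G_k,
\]
which settles $k=2$ outright, and for $k=3$ the remaining three conjugates $C_{1,2},C_{2,1},C_{2,2}$ are reduced by braid-relation manipulations to conjugates of $(T_aT_b)^6$ by $T_c^{\pm1}$, which in turn are shown to equal $T_b^{-1}T_a^{-1}(T_bT_c)^6T_aT_b$ and $T_bT_a(T_bT_c)^6T_a^{-1}T_b^{-1}$, hence lie in $G_3$. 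Without this identity (or an equivalent explicit computation), your proposal does not establish the containments $C_{i,j}\in G_k$; it only asserts that a braid-relation computation should exist. As written, the proposal is an accurate outline of the paper's approach with the substantive content deferred, plus one proposed step ($\iota$-conjugation) that is a dead end.
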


\begin{proof}
When $k=2$, from Theorem~\ref{thm:genset_lmod}, we have 
\[
\lmap_2(S_{1,2})=\langle T_a,T_b,T_c^2,\iota,(T_bT_c)^6,T_cT_bT_a(T_bT_c)^6T_a^{-1}T_b^{-1}T_c^{-1} \rangle.
\]
By Lemma~\ref{lem:relation_dehn_twist}, we have $T_cT_bT_a(b)=a$ and $T_cT_bT_a(c)=b$. Thus,
\begin{equation}
\label{eqn1}
T_cT_bT_a(T_bT_c)^6T_a^{-1}T_b^{-1}T_c^{-1}=(T_aT_b)^6.
\end{equation}
Furthermore, using braid relations, we have
\begin{equation}
\label{eqn2}
(T_bT_c)^6=(T_cT_b)^6=(T_c(T_bT_cT_b)T_cT_b)^2=  (T_c(T_cT_bT_c)T_cT_b)^2=(T_c^2T_b)^4.
\end{equation}
Now, it follows that $\lmap_2(S_{1,2})=\langle T_a,T_b,T_c^2,\iota \rangle$. 

When $k=3$, by Theorem~\ref{thm:genset_lmod}, we have $\lmap_3(S_{1,2})=\langle \s_1\cup \s_2\rangle$, where
$$\s_1=\{T_a,T_b,T_c^3,\iota,(T_bT_c)^6\}$$ and
\begin{align*}
\s_2=\{&T_cT_bT_a(T_bT_c)^6T_a^{-1}T_b^{-1}T_c^{-1},T_c^2T_bT_a(T_bT_c)^6T_a^{-1}T_b^{-1}T_c^{-2},\\
&T_cT_b^2T_a(T_bT_c)^6T_a^{-1}T_b^{-2}T_c^{-1},T_c^2T_b^2T_a(T_bT_c)^6T_a^{-1}T_b^{-2}T_c^{-2}\}
\end{align*}
Now, we reduce the generating set as described above. Using Equation~(\ref{eqn1}), we have
\begin{center}
$T_cT_bT_a(T_bT_c)^6T_a^{-1}T_b^{-1}T_c^{-1}=(T_aT_b)^6$ and $T_c^2T_bT_a(T_bT_c)^6T_a^{-1}T_b^{-1}T_c^{-2}=T_c(T_aT_b)^6T_c^{-1}$.
\end{center}
Furthermore, by using braid relations (Lemma~\ref{lem:relation_dehn_twist}), we have
\begin{align*}
T_cT_b^2T_a(T_bT_c)^6T_a^{-1}T_b^{-2}T_c^{-1}
&=(T_cT_b^2T_c^{-1})T_a(T_bT_c)^6T_a^{-1}(T_cT_b^{-2}T_c^{-1})\\
&=T_b^{-1}T_c^2T_bT_a(T_bT_c)^6T_a^{-1}T_b^{-1}T_c^{-2}T_b\\
&=T_b^{-1}T_c(T_aT_b)^6T_c^{-1}T_b
\end{align*}
Again, using braid relations (Lemma~\ref{lem:relation_dehn_twist}), we have
\begin{align*}
T_c^2T_b^2T_a(T_bT_c)^6T_a^{-1}T_b^{-2}T_c^{-2}
&=T_c^3(T_c^{-1}T_b^2T_c)T_a(T_bT_c)^6(T_c^3(T_c^{-1}T_b^2T_c)T_a)^{-1}\\
&=(T_c^3T_bT_c^3)T_c^{-1}T_b^{-1}T_a(T_bT_c)^6((T_c^3T_bT_c^3)T_c^{-1}T_b^{-1}T_a)^{-1}\\
&=(T_c^3T_bT_c^3)T_c^{-1}(T_b^{-1}T_aT_b)(T_bT_c)^6((T_c^3T_bT_c^3)T_c^{-1}(T_b^{-1}T_aT_b))^{-1}\\
&=(T_c^3T_bT_c^3T_a)T_c^{-1}T_bT_a^{-1}(T_bT_c)^6((T_c^3T_bT_c^3T_a)T_c^{-1}T_bT_a^{-1})^{-1}\\
&=(T_c^3T_bT_c^3T_a)T_c^{-1}T_b[(T_a^{-1}T_bT_a)T_c]^6((T_c^3T_bT_c^3T_a)T_c^{-1}T_b)^{-1}\\
&=(T_c^3T_bT_c^3T_a)T_c^{-1}T_b(T_bT_aT_b^{-1}T_c)^6((T_c^3T_bT_c^3T_a)T_c^{-1}T_b)^{-1}\\
&=(T_c^3T_bT_c^3T_a)T_c^{-1}T_b^2[T_a(T_b^{-1}T_cT_b)]^6((T_c^3T_bT_c^3T_a)T_c^{-1}T_b^2)^{-1}\\
&=(T_c^3T_bT_c^3T_a)T_c^{-1}T_b^2(T_aT_cT_bT_c^{-1})^6((T_c^3T_bT_c^3T_a)T_c^{-1}T_b^2)^{-1}\\
&=(T_c^3T_bT_c^3T_a)(T_c^{-1}T_b^2T_c)(T_aT_b)^6((T_c^3T_bT_c^3T_a)(T_c^{-1}T_b^2T_c))^{-1}\\
&=(T_c^3T_bT_c^3T_aT_bT_c^3)T_c^{-1}(T_aT_b)^6T_c(T_c^3T_bT_c^3T_aT_bT_c^3)^{-1}\\
\end{align*}
Hence, $\lmap_3(S_{1,2})=\langle T_a,T_b,T_c^3,\iota,(T_bT_c)^6,T_c(T_aT_b)^6T_c^{-1},T_c^{-1}(T_aT_b)^6T_c \rangle$. To prove the result, we are going to further reduce this generating set. Using braid relations (Lemma~\ref{lem:relation_dehn_twist}), we have
\begin{align*}
T_c(T_aT_b)^6T_c^{-1}
&=(T_aT_cT_bT_c^{-1})^6\\
&=(T_aT_b^{-1}T_cT_b)^6\\
&=T_b^{-1}(T_bT_aT_b^{-1}T_c)^6T_b\\
&=T_b^{-1}(T_a^{-1}T_bT_aT_c)^6T_b\\
&=T_b^{-1}T_a^{-1}(T_bT_c)^6T_aT_b.
\end{align*}
Similarly, we have
\begin{align*}
T_c^{-1}(T_aT_b)^6T_c
&=(T_aT_c^{-1}T_bT_c)^6\\
&=(T_aT_bT_cT_b^{-1})^6\\
&=T_b(T_b^{-1}T_aT_bT_c)^6T_b^{-1}\\
&=T_b(T_aT_bT_a^{-1}T_c)^6T_b^{-1}\\
&=T_bT_a(T_bT_c)^6T_a^{-1}T_b^{-1}.
\end{align*}
Finally, using Equation~(\ref{eqn2}), we have
\begin{align*}
(T_bT_c)^6
&=(T_cT_b)^6=(T_c^2T_b)^4\\
&=T_c^2T_bT_c^2T_bT_c^2T_bT_c^2T_b\\
&=T_c^3(T_c^{-1}T_bT_c)T_cT_bT_c^2T_bT_c^2T_b\\
&=T_c^3T_bT_c(T_b^{-1}T_cT_b)T_c^2T_bT_c^2T_b\\ &=T_c^3T_bT_c^2(T_bT_cT_b)T_c^2T_b\\
&=T_c^3T_bT_c^3T_bT_c^3T_b\\
&=(T_c^3T_b)^3.
\end{align*}
Therefore, we get $\lmap_3(S_{1,2})=\langle T_a,T_b,T_c^3,\iota\rangle$.
\end{proof}

Let $F_k\in \map(S_k)$ be the mapping class representing the $2\pi/k$-rotation of $S_k$ inducing the cover $p_k:S_k\to S_{1,2}$. There is a short exact sequence
\[
1\to \langle F_k\rangle\to N_{\map(S_k)}(F_k)\xrightarrow{\varphi}\lmap_{p_k}(S_{1,2})\to 1,  
\]
where $N_{\map(S_k)}(F_k)$ is the normalizer of $F_k$ in $\map(S_k)$. As a corollary of Proposition~\ref{prop:genset_lmod_23}, we provide a finite generating set of $N_{\map(S_k)}(F_k)$ for $k=2,3$ by lifting generating sets of $\lmap_{p_k}(S_{1,2})$ under the map $\varphi$.

\begin{cor}
\label{cor:genset_normalizer}
For curves shown in Figure~\ref{fig:s3_s12_cover}, we have
\[
N_{\map(S_2)}(F_2)=\langle F_2,T_{a_1}T_{a_2}, T_{b_1}T_{b_2},T_{c'},\hat{\iota} \rangle,
\]
where $\hat{\iota}\in \map(S_2)$ is the hyperelliptic involution, and
\[
N_{\map(S_3)}(F_3)=\langle F_3,T_{a_1}T_{a_2}T_{a_3}, T_{b_1}T_{b_2}T_{a_3},T_{c'},\iota_1\iota_2\iota_3 \rangle,
\]
where $\iota_1,\iota_2,\iota_3\in \map(S_3)$ are involutions represented by $\pi$-rotations as shown in Figure~\ref{fig:s3_s12_cover}.
\end{cor}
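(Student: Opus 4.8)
The plan is to exploit the short exact sequence
\[
1\to \langle F_k\rangle\to N_{\map(S_k)}(F_k)\xrightarrow{\varphi}\lmap_{p_k}(S_{1,2})\to 1
\]
together with the generating set $\lmap_{p_k}(S_{1,2})=\langle T_a,T_b,T_c^k,\iota\rangle$ from Proposition~\ref{prop:genset_lmod_23}. The group-theoretic fact I would invoke is elementary: in any short exact sequence $1\to N\to G\xrightarrow{\varphi} Q\to 1$ with $N=\langle n_1,\dots\rangle$ and $Q=\langle q_1,\dots,q_s\rangle$, the group $G$ is generated by the $n_i$ together with any choice of lifts $\widetilde{q_1},\dots,\widetilde{q_s}$ satisfying $\varphi(\widetilde{q_j})=q_j$. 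Indeed, given $g\in G$, writing $\varphi(g)$ as a word $w(q_1,\dots,q_s)$ and forming $\widetilde g=w(\widetilde{q_1},\dots,\widetilde{q_s})$ gives $g\widetilde g^{\,-1}\in\ker\varphi=\langle F_k\rangle$. Thus $N_{\map(S_k)}(F_k)$ is generated by $F_k$ together with explicit lifts of $T_a$, $T_b$, $T_c^k$, and $\iota$, and the whole content of the corollary reduces to identifying these four lifts as concrete mapping classes on $S_k$.

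To identify the lifts I would read off, for each curve, the value of $\eta_k$ on its homology class, which governs how the preimage decomposes. Since $[a]=c_1$ and $[b]=c_2$ lie in $\ker\eta_k$, the curves $a$ and $b$ each have $k$ disjoint connected lifts $a_1,\dots,a_k$ and $b_1,\dots,b_k$ forming single $F_k$-orbits, so the natural fiber-preserving lift of $T_a$ is the product $T_{a_1}\cdots T_{a_k}$ of twists about the whole orbit, and likewise for $T_b$. By contrast, because $\eta_k$ is nontrivial on $[c]$, the curve $c$ has a single connected preimage $c'$ covering $c$ with degree $k$, so the least power of $T_c$ that lifts, namely $T_c^k$, lifts to a single Dehn twist $T_{c'}$; this is exactly why $T_{c'}$ appears as a generator for both $k=2$ and $k=3$. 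Finally, the hyperelliptic involution $\iota$ of $S_{1,2}$ lifts to the hyperelliptic involution $\hat{\iota}$ of $S_2$ when $k=2$, and to the composite $\iota_1\iota_2\iota_3$ of the three $\pi$-rotations when $k=3$. Substituting these lifts, together with $F_k$, into the generating principle above yields the asserted generating sets.

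The main obstacle I anticipate is not the group theory but the geometric bookkeeping: one must verify that each candidate on $S_k$ is genuinely a lift, i.e.\ that it is fiber-preserving (hence lies in $\smap_{p_k}(S_k)=N_{\map(S_k)}(F_k)$ by Birman--Hilden) and projects to the correct element of $\lmap_{p_k}(S_{1,2})$ under $\varphi$. Two points demand care. First, one must confirm the decomposition of each preimage predicted by $\eta_k$ and check that $T_{c'}$ really is the image of $T_c^k$, rather than of some other power, which amounts to tracking how $c$ passes through the branch locus of the flower-shaped cover. Second, when $k=3$ the stated lift of $T_b$ is written as $T_{b_1}T_{b_2}T_{a_3}$ rather than the symmetric product $T_{b_1}T_{b_2}T_{b_3}$; matching this to the construction requires a careful reading of the labelling in Figure~\ref{fig:s3_s12_cover}, together with the remark that the generating principle only requires \emph{some} lift, so one is free to record whichever explicit orbit representative the figure provides, provided it remains fiber-preserving and projects to $T_b$. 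Once these identifications are pinned down against the figure, the corollary follows immediately.
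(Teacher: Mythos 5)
Your proposal is correct and follows essentially the same route as the paper: the paper likewise combines the short exact sequence $1\to\langle F_k\rangle\to N_{\map(S_k)}(F_k)\xrightarrow{\varphi}\lmap_{p_k}(S_{1,2})\to 1$ with Proposition~\ref{prop:genset_lmod_23} and simply records the lifts of $a$, $b$, $c$, and $\iota$, while you additionally justify why each preimage decomposes as it does via $\eta_k$. Your remark about $T_{b_1}T_{b_2}T_{a_3}$ is well taken: the paper's own proof states that $b$ lifts to $\{b_1,b_2,b_3\}$ under $p_3$, so the $T_{b_1}T_{b_2}T_{a_3}$ in the statement appears to be a typo for $T_{b_1}T_{b_2}T_{b_3}$.
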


\begin{proof}
We observe that, under $p_2$, $a$ lifts to $\{a_1,a_2\}$, $b$ lifts to $\{b_1,b_2\}$, and $c$ lifts to $c'$. Furthermore, under $p_3$, $a$ lifts to $\{a_1,a_2,a_3\}$, $b$ lifts to $\{b_1,b_2,b_3\}$, and $c$ lifts to $c'$. The result follows from the fact that $\iota$ lifts to $\hat{\iota}\in \map(S_2)$ under $p_2$, and $\iota$ lifts to $\iota_1\iota_2\iota_3\in \map(S_3)$ under $p_3$. 
\end{proof}

\section{Maximality of $\lmap_{p_k}(S_{1,2})$}
\label{sec:maximal}
For $g\geq 2$ and the unbranched cover $p:S_{k(g-1)+1}\to S_g$, in~\cite[Theorem 2.4]{dhanwani_liftable2}, it was proved that $\lmap_p(S_g)$ is maximal in $\map(S_g)$ if and only if $k$ is prime. In this section, for the cover $p_k:S_k\to S_{1,2}$, we prove that $\lmap_{p_k}(S_{1,2})$ is maximal in $\map(S_{1,2})$ if and only if $k$ is prime. To prove this, we need the following lemma.

\begin{lem}
\label{lem:maximal}
Let $H$ be a subgroup of $G$ of index $n$. Let $\{g_1=1,g_2,\dots,g_n\}$ be a set of distinct left-coset representatives of $H$ in $G$. Then $H$ is maximal in $G$ if and only if $\langle H,g_i\rangle=G$ for every $2\leq i\leq n$.
\end{lem}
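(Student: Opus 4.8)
The plan is to prove both implications directly from the definition of a maximal subgroup, exploiting the coset structure to control which subgroups can lie strictly between $H$ and $G$. Here maximality of $H$ means there is no subgroup $K$ with $H \subsetneq K \subsetneq G$. The single observation driving the whole argument is that since $\{g_1=1,g_2,\dots,g_n\}$ is a complete set of left-coset representatives, an element of $G$ lies outside $H$ precisely when it belongs to $g_iH$ for some $i\geq 2$; equivalently, any $x\notin H$ can be written as $x=g_ih$ with $2\leq i\leq n$ and $h\in H$.

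For the forward implication I would assume $H$ is maximal and fix any index $i$ with $2\leq i\leq n$. Since $g_i$ represents a coset distinct from $g_1H=H$, we have $g_i\notin H$, so $\langle H,g_i\rangle$ is a subgroup of $G$ that strictly contains $H$. Maximality then forces $\langle H,g_i\rangle=G$, which is the desired conclusion.

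For the reverse implication I would assume $\langle H,g_i\rangle=G$ for all $2\leq i\leq n$ and take any subgroup $K$ with $H\subseteq K\subseteq G$ and $K\neq H$. Choosing an element $x\in K\setminus H$, the element $x$ lies in some coset $g_iH$ with $i\geq 2$, so $x=g_ih$ for some $h\in H$; then $g_i=xh^{-1}\in K$, since $x\in K$ and $H\subseteq K$. Consequently $G=\langle H,g_i\rangle\subseteq K$, whence $K=G$. Thus no subgroup lies strictly between $H$ and $G$, and $H$ is maximal.

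The argument is entirely elementary, so I do not anticipate any genuine obstacle; the only point demanding care is the bookkeeping that an element outside $H$ is always of the form $g_ih$ with $2\leq i\leq n$ and $h\in H$, which is exactly the defining property of a set of left-coset representatives and must be invoked cleanly so that the containment $g_i\in K$ is justified.
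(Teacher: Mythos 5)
Your proof is correct and follows essentially the same route as the paper's: the forward direction is immediate from $g_i\notin H$, and the reverse direction uses the fact that any element outside $H$ lies in some coset $g_iH$ with $i\geq 2$, so that $\langle H,g_i\rangle=G$ forces any strictly larger subgroup to be all of $G$. The only cosmetic difference is that the paper phrases the reverse direction as showing $\langle H,g\rangle=G$ for every $g\in G\setminus H$, while you argue directly with an intermediate subgroup $K$; these are interchangeable.
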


\begin{proof}
If $H$ is maximal in $G$, then it is apparent that $\langle H,g_i\rangle=G$ for every $2\leq i\leq n$. Conversely, assume that $\langle H,g_i\rangle=G$ for every $2\leq i\leq n$. Let $g\in G\setminus H$. Then there is some $j\neq 1$ such that $g\in g_jH$. Since $gH=g_jH$, we have $\langle H,g\rangle=\langle H,g_j\rangle=G$. Since $g\in G\setminus H$ was arbitrary, it follows that $H$ is maximal in $G$.
\end{proof}

\begin{theorem}
\label{thm:maximal}
Consider the $k$-sheeted cyclic cover $p_k:S_k\to S_{1,2}$. Then $\lmap_{p_k}(S_{1,2})$ is maximal in $\map(S_{1,2})$ if and only if $k$ is prime. 
\end{theorem}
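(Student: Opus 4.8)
The plan is to push the entire question through the homology representation $\Psi$ and reduce it to a statement about the $\mathrm{SL}_2(\Z/k\Z)$-module $(\Z/k\Z)^2$. First I would record that $\ker\Psi\subseteq\lmap_{p_k}(S_{1,2})$: indeed, by Theorem~\ref{thm:genset_lmod} the group $\lmap_{p_k}(S_{1,2})$ is generated by $\{T_a,T_b,T_c^k,\iota\}$ together with the set $\s$, and $\s$ generates $\ker\Psi$ by Theorem~\ref{thm:genset_ker}, so $\ker\Psi=\langle\s\rangle\subseteq\lmap_{p_k}(S_{1,2})$ (conceptually, this just reflects that liftability is detected by the action on homology). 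Since $\ker\Psi$ is normal in $\map(S_{1,2})$ and is contained in $\lmap_{p_k}(S_{1,2})$, the correspondence theorem identifies the subgroups of $\map(S_{1,2})$ lying above $\lmap_{p_k}(S_{1,2})$ with the subgroups of $G:=\Psi(\map(S_{1,2}))$ lying above $H_k:=\Psi(\lmap_{p_k}(S_{1,2}))$. Hence $\lmap_{p_k}(S_{1,2})$ is maximal in $\map(S_{1,2})$ if and only if $H_k$ is maximal in $G$, and it suffices to work with the matrix groups of Lemma~\ref{lem:image_psi}.

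Next I would analyse the structure of $G$ and $H_k$. Let $N=\{\left(\begin{smallmatrix} I & 0\\ v & 1\end{smallmatrix}\right):v\in\Z^2\}\cong\Z^2$ and $Q=\{\left(\begin{smallmatrix} A & 0\\ 0 & \epsilon\end{smallmatrix}\right):A\in\mathrm{SL}_2(\Z),\ \epsilon=\pm1\}$. A direct computation shows $N\trianglelefteq G$ with conjugation acting by $v\mapsto\epsilon vA^{-1}$ (the sign being harmless since $-I\in\mathrm{SL}_2(\Z)$, so the action factors through all of $\mathrm{SL}_2(\Z)$), that $G=N\rtimes Q$, and that $H_k=(k\Z^2)\rtimes Q$ with $Q\subseteq H_k$. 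The key structural claim is that every subgroup $K$ with $H_k\subseteq K\subseteq G$ satisfies $K=(K\cap N)\rtimes Q$: given $\left(\begin{smallmatrix}A&0\\v&\epsilon\end{smallmatrix}\right)\in K$, right-multiplying by $\left(\begin{smallmatrix}A^{-1}&0\\0&\epsilon\end{smallmatrix}\right)\in Q\subseteq K$ yields $\left(\begin{smallmatrix}I&0\\vA^{-1}&1\end{smallmatrix}\right)\in K\cap N$, so the original element lies in $(K\cap N)Q$. Writing $L:=K\cap N$, the subgroup $L$ is $\mathrm{SL}_2(\Z)$-invariant and satisfies $k\Z^2\subseteq L\subseteq\Z^2$, and $K\mapsto L$ is an inclusion-preserving bijection onto the set of all such $L$ (with $L=k\Z^2$ recovering $H_k$ and $L=\Z^2$ recovering $G$). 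Consequently $H_k$ is maximal in $G$ if and only if the only $\mathrm{SL}_2(\Z)$-invariant subgroups $L$ with $k\Z^2\subseteq L\subseteq\Z^2$ are the two extremes.

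Finally I would settle this condition by reducing modulo $k$. Passing to $\Z^2/k\Z^2=(\Z/k\Z)^2$ and using the surjectivity of $\mathrm{SL}_2(\Z)\to\mathrm{SL}_2(\Z/k\Z)$, the intermediate $L$ correspond exactly to $\mathrm{SL}_2(\Z/k\Z)$-submodules of $(\Z/k\Z)^2$, so the maximality of $H_k$ is equivalent to simplicity of the $\mathrm{SL}_2(\Z/k\Z)$-module $(\Z/k\Z)^2$. If $k=p$ is prime, then $(\Z/p\Z)^2=\mathbb{F}_p^2$ and $\mathrm{SL}_2(\mathbb{F}_p)$ acts transitively on the $p^2-1$ nonzero vectors (the stabilizer of a nonzero vector is unipotent of order $p$, and $|\mathrm{SL}_2(\mathbb{F}_p)|=p(p^2-1)$, forcing a single orbit); hence any nonzero submodule is everything and the module is simple, so $\lmap_{p_k}(S_{1,2})$ is maximal. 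If $k$ is composite, choose a divisor $d$ with $1<d<k$; then $d\Z^2$ is $\mathrm{SL}_2(\Z)$-invariant and satisfies $k\Z^2\subsetneq d\Z^2\subsetneq\Z^2$, producing a proper intermediate subgroup, so $\lmap_{p_k}(S_{1,2})$ is not maximal. Equivalently, one may phrase the last step via Lemma~\ref{lem:maximal}: taking coset representatives $g_{(i,j)}$ ($0\le i,j<k$) with $\Psi(g_{(i,j)})=\left(\begin{smallmatrix}I&0\\(i,j)&1\end{smallmatrix}\right)$, maximality amounts to $\langle\lmap_{p_k}(S_{1,2}),g_{(i,j)}\rangle=\map(S_{1,2})$ for all $(i,j)\neq(0,0)$, which is precisely the requirement that each nonzero vector generate $(\Z/k\Z)^2$ as an $\mathrm{SL}_2(\Z/k\Z)$-module.

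The genuinely new content, and the step where I would be most careful, is the structural claim that every intermediate subgroup splits as $(K\cap N)\rtimes Q$, since this is what collapses the problem to invariant sublattices; verifying that $K\cap N$ is $\mathrm{SL}_2(\Z)$-invariant and that the assignment $K\mapsto K\cap N$ is a genuine inclusion-preserving bijection is the crux. Once that is in place, the transitivity argument for prime $k$ and the sublattice $d\Z^2$ for composite $k$ are routine, as is the containment $\ker\Psi\subseteq\lmap_{p_k}(S_{1,2})$ that legitimizes the reduction to $G$ and $H_k$.
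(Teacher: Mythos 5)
Your proof is correct, and for the composite case it coincides with the paper's: both exhibit the intermediate subgroup corresponding to the invariant sublattice $d\Z^2$ (the paper's $\ell\Z\times\ell\Z$) sitting strictly between $k\Z^2$ and $\Z^2$. For the prime direction, however, your route is genuinely different in organization. The paper reduces modulo $k$, lists the $k^2$ coset representatives of $\Psi_k(\lmap_{p_k}(S_{1,2}))$ explicitly, and applies Lemma~\ref{lem:maximal} together with direct matrix manipulation of $\Psi_k(T_c^mT_bT_c^n)$, Lemma~\ref{lem:gcd_sl2z}, and a B\'{e}zout argument to show that adjoining any nontrivial representative yields $\Psi_k(T_c)$. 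You instead isolate the semidirect-product structure $G=N\rtimes Q$, prove that every subgroup $K$ in the interval $[H_k,G]$ splits as $(K\cap N)\rtimes Q$, and thereby identify that entire interval with the lattice of $\mathrm{SL}_2(\Z/k\Z)$-invariant subgroups of $(\Z/k\Z)^2$; simplicity for prime $k$ then follows from transitivity of $\mathrm{SL}_2(\mathbb{F}_p)$ on nonzero vectors. Your approach buys a cleaner conceptual statement (both implications collapse to: $(\Z/k\Z)^2$ is a simple $\mathrm{SL}_2(\Z/k\Z)$-module iff $k$ is prime) and actually classifies all intermediate subgroups rather than testing coset representatives one at a time; the paper's approach is more computational but entirely self-contained at the level of explicit matrices. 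The step you flag as the crux --- the splitting $K=(K\cap N)\rtimes Q$ --- checks out: since $Q\subseteq H_k\subseteq K$, right multiplication by a suitable element of $Q$ moves any element of $K$ into $K\cap N$, and $K\cap N$ is $\mathrm{SL}_2(\Z)$-invariant because $N$ is normal in $G$ and $Q\subseteq K$. The preliminary containment $\ker\Psi\subseteq\lmap_{p_k}(S_{1,2})$, which legitimizes passing to $\Psi$-images in both your argument and the paper's, is also correctly justified.
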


\begin{proof}
Suppose $k$ is not prime and $\ell \mid k$ such that $1<\ell<k$. It is enough to show that $\Psi(\lmap_{p_k}(S_{1,2}))$ is not maximal in $\Psi(\map(S_{1,2}))$. For the subgroup
\[
K=
\left\{
\begin{pmatrix}
A & 0 \\ 
v & \pm1
\end{pmatrix}
\Big|~
A\in \mathrm{SL}_2(\Z), v=(m,n)\in \ell\Z\times \ell\Z
\right\},
\]
we have $\Psi(\lmap_{p_k}(S_{1,2}))\lneq K \lneq \Psi(\map(S_{1,2}))$ as $\Psi(T_c^\ell) \not\in \Psi(\lmap_{p_k}(S_{1,2}))$ and $\Psi(T_c)\not\in K$. Thus, $\lmap_{p_k}(S_{1,2})$ is not maximal in $\map(S_{1,2})$ when $k$ is not prime.

Now, assume that $k$ is prime. Let $\Psi_k:\map(S_{1,2})\to \mathrm{GL}_3(\Z_k)$ be the representation obtained from $\Psi$ by reduction modulo $k$. It is enough to show that $\Psi_k(\lmap_{p_k}(S_{1,2}))$ is maximal in $\Psi_k(\map(S_{1,2}))$. We have
\[
\Psi_k(\map(S_{1,2}))=
\left\{
\begin{pmatrix}
A & 0 \\ 
v & \pm1
\end{pmatrix}
\Big|~
A\in \mathrm{SL}_2(\Z_k), v=(m,n)\in \Z_k\times \Z_k
\right\}
\]
and
\[
\Psi_k(\lmap_{p_k}(S_{1,2}))=
\left\{
\begin{pmatrix}
A & 0 \\ 
0 & \pm1
\end{pmatrix}
\Big|~
A\in \mathrm{SL}_2(\Z_k)
\right\}.
\]
We observe that
\[
\left\{
\begin{pmatrix}
1 & 0 & 0 \\ 
0 & 1 & 0 \\
m & n & 1 \\
\end{pmatrix}
\Big|~
m,n\in \Z_k
\right\}=
\left\langle
\begin{pmatrix}
1 & 0 & 0 \\ 
0 & 1 & 0 \\ 
0 & 1 & 1
\end{pmatrix},
\begin{pmatrix}
1 & 0 & 0 \\ 
0 & 1 & 0 \\ 
-1 & 1 & 1
\end{pmatrix}
\right\rangle
\]
is the set of distinct coset representatives for $\Psi_k(\lmap_{p_k}(S_{1,2}))$ in $\Psi_k(\map(S_{1,2}))$. By Lemma~\ref{lem:image_twists}, it can be computed that
\[
\Psi_k(T_a^{-1}T_c)=
\begin{pmatrix}
1 & 0 & 0 \\ 
0 & 1 & 0 \\ 
0 & 1 & 1
\end{pmatrix} \text{ and }
\Psi_k(T_b^{-1}T_a^{-1}T_cT_b)=
\begin{pmatrix}
1 & 0 & 0 \\ 
0 & 1 & 0 \\ 
-1 & 1 & 1
\end{pmatrix}.
\]
Therefore, the set $\{\Psi_k[(T_b^{-1}T_a^{-1}T_cT_b)^m(T_a^{-1}T_c)^n]\mid m,n\in \Z_k\}$ is the set of distinct coset representatives for $\Psi_k(\lmap_{p_k}(S_{1,2}))$ in $\Psi_k(\map(S_{1,2}))$. We denote $H=\Psi_k(\lmap_{p_k}(S_{1,2}))$, $h_{m,n}=\Psi_k[(T_b^{-1}T_a^{-1}T_cT_b)^m(T_a^{-1}T_c)^n]$, and $H_{m,n}=\langle H,h_{m,n}\rangle$. Since $\Psi_k(T_a),\Psi_k(T_b)\in H$, we have $H_{m,n}=\langle H, \Psi_k(T_c^mT_bT_c^n)\rangle$. By Lemma~\ref{lem:image_twists}, we can compute that
\[
\Psi_k(T_c^mT_bT_c^n)=
\begin{pmatrix}
-m+1 & m-mn+n & 0 \\ 
-1 & -n+1 & 0 \\ 
-m & m-mn+n & 1
\end{pmatrix}.
\]
Now, assume that $(m,n)\neq (0,0)$, that is, $h_{m,n}\neq 1$. For $\gcd(-m,m-mn+n)=\gcd(n,-m)=\ell$, by Lemma~\ref{lem:gcd_sl2z}, there is a matrix in $\mathrm{SL}_2(\Z_k)$ such that if we multiply it with $\Psi_k(T_c^mT_bT_c^n)$, we get a matrix of the form
\[
X=\begin{pmatrix}
A & 0 \\ 
v & 1
\end{pmatrix}, 
\]
where $v=(0,\ell)$ and $A\in \mathrm{SL}_2(\Z_k)$. Since $\Psi_k(T_a), \Psi_k(T_b)\in H$, we can multiply $X$ with some matrix in $\mathrm{SL}_2(\Z_k)$ to get $\Psi_k(T_c^{\ell})\in \langle H, \Psi_k(T_c^mT_bT_c^n)\rangle=H_{m,n}$. Since $k$ is prime and $\ell=\gcd(n,-m)$, we have $\gcd(\ell,k)=1$. Therefore, there are integers $r,s$ such that $\ell r+ks=1$. Thus, $\Psi_k(T_c)=\Psi_k(T_c^{\ell})^r\Psi_k(T_c^k)^s$. Since $\Psi_k(T_c^k),\Psi_k(T_c^{\ell})\in H_{m,n}$, we get that $\Psi_k(T_c)\in H_{m,n}$ for $h_{m,n}\neq 1$. Thus $H_{m,n}=\Psi_k(\map(S_{1,2}))$ for $h_{m,n}\neq 1$. Hence, by Lemma~\ref{lem:maximal}, we get that $\lmap_{p_k}(S_{1,2})$ is maximal in $\map(S_{1,2})$ when $k$ is prime.
\end{proof}

\section{Acknowledgment}
The author like to thank Apeksha Sanghi for a careful reading of the paper and pointing out some typos in an earlier version of the paper.

\bibliographystyle{plain}
\bibliography{liftable_torus}

\begin{thebibliography}{10}

\bibitem{dhanwani_liftable1}
Nikita Agarwal, Soumya Dey, Neeraj~K. Dhanwani, and Kashyap Rajeevsarathy.
\newblock Liftable mapping class groups of regular cyclic covers.
\newblock {\em Houston J. Math.}, 47(1):223--243, 2021.

\bibitem{abelian_all_lift_rp2}
Ferihe Atalan, Elif Medetogullari, and Y\i ld\i~ray Ozan.
\newblock Liftable homeomorphisms of cyclic and rank two finite abelian
  branched covers over the real projective plane.
\newblock {\em Topology Appl.}, 288:Paper No. 107479, 9, 2021.

\bibitem{abelian_all_lift1}
Ferihe Atalan, Elif Medetogullari, and Y\i ld\i~ray Ozan.
\newblock Liftable homeomorphisms of rank two finite abelian branched covers.
\newblock {\em Arch. Math. (Basel)}, 116(1):37--48, 2021.

\bibitem{birman_torelli}
Joan~S. Birman.
\newblock On siegel's modular group.
\newblock {\em Mathematische Annalen}, 191:59--68, 1971.

\bibitem{birman71}
Joan~S. Birman and Hugh~M. Hilden.
\newblock On the mapping class groups of closed surfaces as covering spaces.
\newblock In {\em Advances in the theory of {R}iemann surfaces ({P}roc.
  {C}onf., {S}tony {B}rook, {N}.{Y}., 1969)}, pages 81--115. Ann. of Math.
  Studies, No. 66, 1971.

\bibitem{birman73}
Joan~S. Birman and Hugh~M. Hilden.
\newblock On isotopies of homeomorphisms of {R}iemann surfaces.
\newblock {\em Ann. of Math. (2)}, 97:424--439, 1973.

\bibitem{broughton92}
S.~Allen Broughton.
\newblock Normalizers and centralizers of elementary abelian subgroups of the
  mapping class group.
\newblock In {\em Topology '90 ({C}olumbus, {OH}, 1990)}, volume~1 of {\em Ohio
  State Univ. Math. Res. Inst. Publ.}, pages 77--89. de Gruyter, Berlin, 1992.

\bibitem{abelian_all_lift2}
Haimiao Chen.
\newblock Lifting homeomorphisms and finite abelian branched covers of the
  2-sphere.
\newblock {\em Bull. Aust. Math. Soc.}, 106(3):504--507, 2022.

\bibitem{dhanwani_liftable2}
Soumya Dey, Neeraj~K. Dhanwani, Harsh Patil, and Kashyap Rajeevsarathy.
\newblock Generating the liftable mapping class groups of regular cyclic
  covers.
\newblock {\em Math. Proc. Cambridge Philos. Soc.}, 178(2):157--176, 2025.

\bibitem{kapdi_liftable2}
Neeraj~K Dhanwani, Pankaj Kapari, Kashyap Rajeevsarathy, and Ravi Tomar.
\newblock Liftable mapping class groups of regular abelian covers.
\newblock {\em arXiv:2412.07319}, 2024.

\bibitem{primer}
Benson Farb and Dan Margalit.
\newblock {\em A primer on mapping class groups}, volume~49 of {\em Princeton
  Mathematical Series}.
\newblock Princeton University Press, Princeton, NJ, 2012.

\bibitem{ghaswala_thesis}
Tyrone Ghaswala.
\newblock {\em The Liftable Mapping Class Group}.
\newblock PhD thesis, 2017.
\newblock Available at: https://tyjgh.github.io/phd.pdf.

\bibitem{ghaswala17}
Tyrone Ghaswala and Rebecca~R. Winarski.
\newblock The liftable mapping class group of balanced superelliptic covers.
\newblock {\em New York J. Math.}, 23:133--164, 2017.

\bibitem{winarski17}
Tyrone Ghaswala and Rebecca~R. Winarski.
\newblock Lifting homeomorphisms and cyclic branched covers of spheres.
\newblock {\em Michigan Math. J.}, 66(4):885--890, 2017.

\bibitem{hirose_liftable}
Susumu Hirose and Genki Omori.
\newblock Finite presentations for the balanced superelliptic mapping class
  groups.
\newblock {\em J. Topol. Anal.}, 17(6):1625--1724, 2025.

\bibitem{kapdi_liftable1}
Pankaj Kapari, Kashyap Rajeevsarathy, and Apeksha Sanghi.
\newblock Generating the liftable mapping class groups of cyclic covers of
  spheres.
\newblock {\em Algebraic \& Geometric Topology}, 25(5):2883--2903, 2025.

\bibitem{2g+2}
Yasushi Kasahara.
\newblock Reducibility and orders of periodic automorphisms of surfaces.
\newblock {\em Osaka J. Math.}, 28(4):985--997, 1991.

\bibitem{harvey75}
C.~Maclachlan and W.~J. Harvey.
\newblock On mapping-class groups and {T}eichm\"{u}ller spaces.
\newblock {\em Proc. London Math. Soc. (3)}, 30(part, part 4):496--512, 1975.

\bibitem{margalit21}
Dan Margalit and Rebecca~R. Winarski.
\newblock Braids groups and mapping class groups: the {B}irman-{H}ilden theory.
\newblock {\em Bull. Lond. Math. Soc.}, 53(3):643--659, 2021.

\bibitem{mccullough86}
Darryl McCullough and Andy Miller.
\newblock The genus {$2$} {T}orelli group is not finitely generated.
\newblock {\em Topology Appl.}, 22(1):43--49, 1986.

\bibitem{mess_torelli}
Geoffrey Mess.
\newblock The {T}orelli groups for genus {$2$} and {$3$} surfaces.
\newblock {\em Topology}, 31(4):775--790, 1992.

\bibitem{tomaszewski}
Witold Tomaszewski.
\newblock A basis of {B}achmuth type in the commutator subgroup of a free
  group.
\newblock {\em Canad. Math. Bull.}, 46(2):299--303, 2003.

\end{thebibliography}
\end{document}